\documentclass[12pt]{article}
\usepackage{graphicx}
\usepackage{caption}
\usepackage{csquotes}
\usepackage[english]{babel}
\usepackage{amsthm}
\usepackage{subcaption}
\usepackage[colorlinks, citecolor=blue, urlcolor = blue]{hyperref}
\usepackage{amssymb}
\usepackage{amsmath}
\usepackage{bm}
\usepackage{comment}
\usepackage[shortlabels]{enumitem}
\usepackage{multicol}
\theoremstyle{plain}
\newtheorem{theorem}{Theorem}[section]

\theoremstyle{definition}
\newtheorem{definition}[theorem]{Definition}

\theoremstyle{plain}

\theoremstyle{plain}
\newtheorem{corollary}[theorem]{Corollary}

\theoremstyle{plain}
\newtheorem{lemma}[theorem]{Lemma}

\theoremstyle{definition}
\newtheorem{example}[theorem]{Example}

\theoremstyle{definition}
\newtheorem{fact}[theorem]{Fact}

\renewcommand\bar{\overline}

\newcommand{\sube}{\subseteq}

\newcommand{\RR}{\mathbb{R}}

\newcommand{\NN}{\mathbb{N}}

\newcommand{\BB}{\mathbb{B}}

\newcommand{\gph}{\operatorname{gph}}

\newcommand{\dom}{\operatorname{dom}}
\newcommand{\inte}{\operatorname{int}}

\title{On a Non-Prox-Regularity Example by Rockafellar and Wets}
\author{Isaac Jasper\thanks{Department of Mathematics, Irving K.\ Barber Faculty of Science, University
of British Columbia, Kelowna, British Columbia V1V 1V7,
Canada. E-mail: \texttt{ijasper@student.ubc.ca}} and Xianfu Wang\thanks{Department of Mathematics, Irving K.\ Barber Faculty of Science, University
of British Columbia, Kelowna, British Columbia V1V 1V7,
Canada. E-mail: \texttt{shawn.wang@ubc.ca}}}

\begin{document}

\maketitle
\begin{abstract} \noindent
We analyze a counterexample concerning non-prox-regular functions provided by Rockafellar and Wets in their 1998 foundational monograph, \textit{Variational Analysis}. By establishing a general theorem on the continuity of set-valued mappings, we show that the counterexample does not have its intended properties. We propose a slight adjustment to the construction and prove that the corrected version serves as the desired counterexample.
\end{abstract}

\noindent {\bfseries 2020 Mathematics Subject Classification:}
Primary 49J53, 47H04; Secondary 47H05, 90C26.

\noindent {\bfseries Keywords:} Moreau envelope, prox-regular function, proximal mapping, set-valued operator, semicontinuity.

\section{Introduction}
Prox-regular functions are important in variational analysis and optimization; see, e.g.,
\cite{Poliquin}, \cite{poliquin1}, \cite[Chapter 13]{Rockafellar} for finite-dimensional spaces, and \cite{lionel1}, \cite{BERNARD20051}, \cite{lionel2}, \cite{boris24} for infinite-dimensional spaces.
To state the goal of this paper, we start with the following fundamental concepts from variational analysis.
\begin{definition}
        (Moreau envelopes and proximal mappings). For a proper, lsc function $f: \RR^n \to \bar \RR$ and $\lambda > 0$, the Moreau envelope function $e_\lambda f$ and proximal mapping $P_\lambda f$ are defined by
        \begin{align*}
        e_\lambda f(x) &:= \inf_w \left\{ f(w) + \frac{1}{2\lambda} \|w - x\|^2 \right \} \le f(x), \\
        P_\lambda f(x) &:= \arg \min_w \left\{ f(w) + \frac{1}{2\lambda} \|w-x\|^2\right\}.
        \end{align*}
    \end{definition}
Observe that while $P_{\lambda}f$ is a Lipschitz mapping with Lipschitz modulus $1$ for a convex function $f$ \cite{bauschke}, in general $P_\lambda f$ is a set-valued mapping \cite{Rockafellar}.
For a prox-regular function,
   Rockafellar and Wets proved the following beautiful result; see \cite[Proposition 13.37]{Rockafellar} and
also \cite[Theorem 4.4]{Poliquin}.
    \begin{fact}
    \label{_rockafellarProp}
    Suppose that $f : \RR^n \to \bar \RR$ is prox-regular at $\bar x$ for $\bar v = 0$, and that $f$ is prox-bounded. Then for all $\lambda > 0$ sufficiently small, there is a neighborhood of $\bar x$ on which
    \begin{enumerate}[(i)]
        \item\label{i:mor1}
        $P_\lambda f$ is monotone, single-valued and Lipschitz continuous; $P_\lambda f(\bar x) = \bar x$,
        \item\label{i:mor2}
         $e_\lambda f$ is differentiable with $\nabla (e_\lambda f)(\bar x) = 0$, in fact of class ${\cal C}^{1+}$ with
        \[
        \nabla e_\lambda f = \lambda^{-1}[I - P_\lambda f] = [\lambda I + T^{-1}]^{-1}
        \]
        for an $f$-attentive localization $T$ of $\partial f$ at $(\bar x, 0)$.
    \end{enumerate}
    \end{fact}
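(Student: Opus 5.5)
Since this is Proposition 13.37 of Rockafellar--Wets, I will follow the standard route through the $f$-attentive localization of $\partial f$.

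\emph{Setup.} Prox-regularity at $\bar x$ for $\bar v=0$ gives $\varepsilon>0$ and $r\ge 0$ such that
\[
f(x')\ge f(x)+\langle v,x'-x\rangle-\tfrac r2\|x'-x\|^2
\]
whenever $\|x'-\bar x\|<\varepsilon$, $(x,v)\in\gph\partial f$, $\|x-\bar x\|<\varepsilon$, $\|v\|<\varepsilon$ and $f(x)<f(\bar x)+\varepsilon$. Let $T$ be this localization of $\partial f$. The inequality shows that $T+rI$ is monotone: add the inequality for $(x,v)$ at $x'$ to the one for $(x',v')$ at $x$. We may take $f(\bar x)=0$.

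\emph{Step 1: localization of the prox.} Prox-boundedness gives some $\lambda_f>0$ for which $e_\lambda f$ is finite for $\lambda<\lambda_f$. Standard attainment and continuity results give:
\begin{itemize}
\item $P_\lambda f(x)$ is nonempty and compact near $\bar x$;
\item $e_\lambda f$ is continuous, and $e_\lambda f(x)\to f(\bar x)$ as $x\to\bar x$.
\end{itemize}
Taking $w=\bar x$ in the infimum and using the subgradient inequality at $(\bar x,0)$ gives $e_\lambda f(\bar x)=f(\bar x)$ once $\lambda<1/r$. For such $\lambda$ the prox-regularity inequality with $x=\bar x$, $v=0$ also shows that $\bar x$ is the unique minimizer, so $P_\lambda f(\bar x)=\{\bar x\}$.

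Next I need localization. For $x$ near $\bar x$ and $\lambda$ small, any $w\in P_\lambda f(x)$ should satisfy all of the following:
\begin{itemize}
\item $\|w-\bar x\|<\varepsilon$;
\item $f(w)<\varepsilon$;
\item $v:=(x-w)/\lambda$ satisfies $\|v\|<\varepsilon$.
\end{itemize}
I expect this step to be the main obstacle, since it needs a uniform bound on $P_\lambda f$ near $\bar x$ together with attentiveness (control of $f(w)$). I would get it from the outer semicontinuity of $P_\lambda f$ at $\bar x$, where it is single-valued, combined with $f(w)\le e_\lambda f(x)$, which is close to $0$. The subtle point is the bound $\|x-w\|<\lambda\varepsilon$. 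For it I would first shrink $\lambda$ and then take $x$ in a neighborhood of size depending on $\lambda$. Fermat's rule then gives $v\in\partial f(w)$, so $v\in T(w)$, i.e.
\[
w\in (I+\lambda T)^{-1}(x).
\]

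\emph{Step 2: single-valuedness, Lipschitz and monotone.} For $w_i\in P_\lambda f(x_i)$ with $v_i=(x_i-w_i)/\lambda$, monotonicity of $T+rI$ gives
\[
\langle x_1-x_2-(w_1-w_2),\,w_1-w_2\rangle\ge -\lambda r\|w_1-w_2\|^2 .
\]
Hence
\[
(1-\lambda r)\|w_1-w_2\|^2\le\langle x_1-x_2,\,w_1-w_2\rangle .
\]
For $\lambda<1/r$ this yields single-valuedness (take $x_1=x_2$), Lipschitz constant $1/(1-\lambda r)$, and monotonicity of $P_\lambda f$.

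\emph{Step 3: differentiability of the envelope.} For $x,x'$ near $\bar x$, set $w=P_\lambda f(x)$. Using $w$ as a competitor at $x'$,
\[
e_\lambda f(x')\le e_\lambda f(x)+\lambda^{-1}\langle x-w,\,x'-x\rangle+\tfrac1{2\lambda}\|x'-x\|^2 .
\]
Swapping the roles of $x$ and $x'$ gives a lower bound whose error is $o(\|x'-x\|)$, because $P_\lambda f$ is Lipschitz. Therefore
\[
\nabla e_\lambda f=\lambda^{-1}(I-P_\lambda f),
\]
which is Lipschitz, so $e_\lambda f$ is of class $\mathcal C^{1+}$, and $\nabla e_\lambda f(\bar x)=0$ because $P_\lambda f(\bar x)=\bar x$.

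Finally, $y=\lambda^{-1}(x-w)$ with $y\in T(w)$ means $x\in w+\lambda y\in (T^{-1}+\lambda I)(y)$. So $\nabla e_\lambda f=[\lambda I+T^{-1}]^{-1}$ locally, with the inverse single-valued by Step 2.
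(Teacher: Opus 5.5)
The paper gives no proof of this Fact; it quotes Rockafellar--Wets, Proposition 13.37, and Poliquin--Rockafellar, Theorem 4.4. Your overall route is the standard one: the $f$-attentive localization $T$, hypomonotonicity of $T+rI$, identifying $P_\lambda f$ with $(I+\lambda T)^{-1}$ near $\bar x$, and then the two-sided envelope estimate. Steps 2 and 3 are fine. Your inequality $(1-\lambda r)\|w_1-w_2\|^2\le\langle x_1-x_2,w_1-w_2\rangle$ uses only $v_i\in T(w_i)$, so it also makes $(I+\lambda T)^{-1}$, and hence $[\lambda I+T^{-1}]^{-1}$, single-valued, which the final identity needs.

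The gap is in Step 1, in the part you treated as routine. You assert $e_\lambda f(\bar x)=f(\bar x)$ and $P_\lambda f(\bar x)=\{\bar x\}$ as soon as $\lambda<1/r$. But prox-regularity at $(\bar x,0)$ only gives $f(x')\ge f(\bar x)-\frac r2\|x'-\bar x\|^2$ for $x'\in\BB_\varepsilon(\bar x)$; outside that ball $f$ can dip lower. For example, take $f=0$ on $(-1,1)$ and $f=-1$ on $\{|x|\ge 1\}$. This $f$ is lsc, bounded below, and prox-regular at $0$ for $0$ with $\varepsilon=\frac12$ and $r=0$, so $\lambda r<1$ for every $\lambda$. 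Yet for $\lambda>\frac12$ one has $e_\lambda f(0)\le -1+\frac1{2\lambda}<0=f(0)$ and $P_\lambda f(0)=\{-1,1\}$. Your localization argument (outer semicontinuity at a point where $P_\lambda f$ is the singleton $\{\bar x\}$) and the limit $e_\lambda f(x)\to f(\bar x)$ both rest on this claim, so Step 1 fails as written.

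This is exactly where prox-boundedness must enter. Fix $\lambda_0$ with $e_{\lambda_0}f(\bar x)>-\infty$, so that $f(w)\ge e_{\lambda_0}f(\bar x)-\frac1{2\lambda_0}\|w-\bar x\|^2$ for all $w$. If $\|x-\bar x\|\le\delta/4$ and $\|w-\bar x\|\ge\delta$, then $\|w-x\|^2-\|x-\bar x\|^2\ge\frac12\|w-\bar x\|^2$. Hence $f(w)+\frac1{2\lambda}\|w-x\|^2-\bigl[f(\bar x)+\frac1{2\lambda}\|x-\bar x\|^2\bigr]\ge e_{\lambda_0}f(\bar x)-f(\bar x)+\bigl(\frac1{4\lambda}-\frac1{2\lambda_0}\bigr)\delta^2$, which is positive for all $\lambda$ below a threshold independent of $x$. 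So every minimizer lies in $\BB_\delta(\bar x)$, uniformly for $x$ near $\bar x$.

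Choosing $\delta<\varepsilon$ small enough that the local lower semicontinuity from prox-regularity holds on the closed $\delta$-ball also fixes a second point. You relied on the global attainment, continuity and osc results (Rockafellar--Wets 1.25 and 5.23), which require $f$ lsc on all of $\RR^n$, whereas prox-regularity gives only local lsc at $\bar x$. With the estimate above you can minimize over a compact ball on which $f$ is lsc instead.

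After this, the bound you flagged as the main obstacle is easy. The local minorant gives $\|w-x\|\le\frac{1+\lambda r}{1-\lambda r}\|x-\bar x\|$, which also yields $P_\lambda f(\bar x)=\{\bar x\}$. So $\|(x-w)/\lambda\|<\varepsilon$ on a ball of radius proportional to $\lambda$, and $f(w)\le f(\bar x)+\frac1{2\lambda}\|x-\bar x\|^2$ gives attentiveness.
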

 Rockafellar and Wets \cite[page 618]{Rockafellar} then claimed that the function
\begin{equation}\label{e:rock-wets}
    f:\RR\rightarrow\RR: x\mapsto
    \begin{cases}
        |x| \left( 1 + \sin \left( \frac 1 x \right) \right), & x \ne 0 \\
        0, & x = 0
    \end{cases}
\end{equation}
\noindent is an example showing that Fact~\ref{_rockafellarProp}\ref{i:mor1} and \ref{i:mor2}
 do not suffice for prox-regularity of $f$ at $\bar x =0$ for $\bar v = 0$. See Figure~\ref{fig:fPlot} below for the plot of the function.
 \begin{figure}[h]
        \centering
        \includegraphics[width=0.8\linewidth]{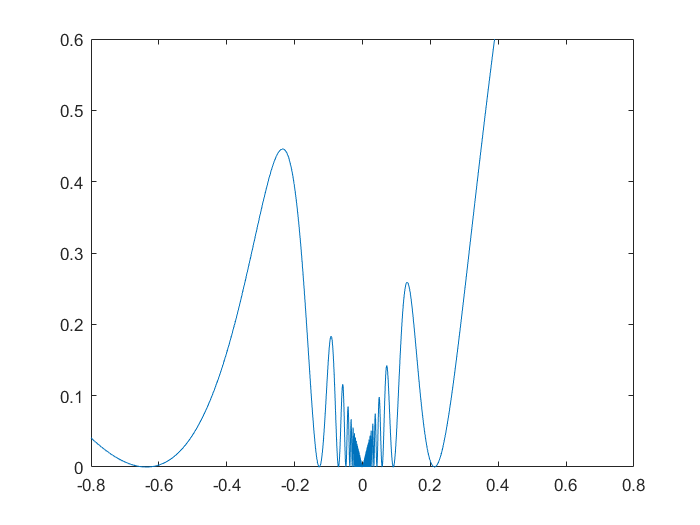}
        \caption{Plot of $f$.}
        \label{fig:fPlot}
    \end{figure}
The same results has also been documented in \cite[Remark 5.4]{BERNARD20051}.
    Indeed, it is simple to show that $f$ is not prox-regular at $\bar x = 0$ for $\bar v = 0$ and that $P_\lambda f(x)$ is monotone. But the other properties are more difficult to show. It turns out,
    for all neighborhoods of $0$, $P_\lambda f$ is not continuous, much less Lipschitz continuous,
    and has points where it is multivalued in all neighborhoods of $0$. This is one of the core results
    of this paper. Additionally, at the end of this paper we show that a small change to $f$ results in a function that indeed has the claimed properties.

The rest of the paper is organized as follows. Section~\ref{s:basic} presents the prerequisite definitions and results in variational analysis. In section~\ref{sect:proxExample} we present an abstract theorem regarding the continuity of set-valued mappings and use it to show that the counter example proposed by Rockafellar and Wets does not work as intended. In section~\ref{s:final} we provide a family of functions which have locally Lipschitz proximal mappings but the functions are not prox-regular at $0$, which in turn  give a corrected version of the counter example that works as Rockafellar and Wets originally intended.

\textbf{Setup and notation:} Our notations follow \cite{Rockafellar}. Throughout $\RR^n$ denotes the standard Euclidean space with inner product $\langle x,y \rangle := \sum_{i=1}^n x_iy_i$ and norm $\|x\| := \sqrt{\langle x, x\rangle}$ for $x,y \in \RR^n$. The extended real line is $\bar \RR := \RR \cup \{+\infty, -\infty\}$. An open ball of radius $\varepsilon$ around a point $\bar x \in \RR^n$ is denoted by $\BB_\varepsilon(\bar x) := \{x\in\RR^n\mid \|x - \bar x\| < \varepsilon\}$ and the unit ball we denote by $\BB := \BB_1(0)$.
A set $C \sube \RR^n$ is convex if for any $x_0,x_1 \in C$ and any $\lambda \in [0,1]$ we have
$(1-\lambda) x_0 + \lambda x_1 \in C.$
 A function $f: \RR^n \to \bar \RR$ is proper if $f(\bar x) < +\infty$ for some $\bar x\in \RR^n$, and $f(x) > -\infty$ for all $x\in \RR^n$. For a sequence $\{x^\nu \}_{\nu \in \NN}$ in $\RR^n$ we write $x^\nu \xrightarrow[f]{} \bar x$ if $\{x^\nu\}_{\nu\in\NN}$ converges to $\bar x$ and $\{f(x^\nu)\}_{\nu \in \NN}$ converges to $f(\bar x)$. For a set $A \sube \RR^n$ we write $x^\nu \xrightarrow[A]{} \bar x$ when $\{x^\nu\}_{\nu\in\NN}$ converges to $\bar x$ while each $x^\nu \in A$.
We say that $f:\RR^n\rightarrow\bar\RR$ is ${\cal C}^{1+}$ if it has a locally Lipschitz continuous derivative.
By $S:\RR^n\rightrightarrows \RR^m$ we mean a set-valued mapping from $\RR^n$ to subsets of $\RR^m$. We say that $S$ is empty-valued, single-valued, or multivalued at $x$ according to whether $S(x)$ is the empty set, a singleton, or a set containing more than one element. $S$ is closed valued or convex valued if the sets $S(x)$ are closed or convex respectively.
Moreover, we write
$\gph S := \{ (x,u)\in \RR^n\times \RR^m\mid u \in S(x) \}$ for the graph of $S$, and $\dom S := \{x\in\RR^n \mid S(x) \ne \varnothing\}$ for the domain of $S$. Finally, ${\mathcal N}_\infty$ denotes subsets of $\NN$ containing all $\nu$ sufficiently large.

\section{Basic definitions and preliminaries}\label{s:basic}
	In this section, for the readers’ convenience, we collect some definitions and basic facts regarding prox-regularity, proximal mappings and set-valued mappings, which will be used in sequel.
More details can be found in \cite{Rockafellar}.

    \begin{definition}
        A function $f: \RR^n \to \bar \RR$ is lower semicontinuous (lsc) at $\bar x$ if
        $
        \liminf_{x \to \bar x} f(x) \ge f(\bar x),$ and lsc on $\RR^n$ if this holds for every $\bar x\in\RR^n$.
    \end{definition}

Subdifferentials are fundamental in variational analysis, see, e.g., \cite{Rockafellar}, \cite{boris06}.

    \begin{definition} (regular and general subgradients).
         For a function $f: \RR^n \to \bar \RR$ at a point $\bar x$ with $f(\bar x)$ finite, a vector $v \in \RR^n$ is a regular subgradient of $f$ at $\bar x$, written $v\in \hat\partial f(\bar x)$, if
        \[
        \liminf_{\substack{x \to \bar x \\ x \ne \bar x}} \frac{f(x) - f(\bar x) - \langle v, x - \bar x\rangle}{\|x - \bar x \|} \ge 0;
        \]
        $v$ is a general subgradient (Mordukhovich limiting subgradient) of $f$ at $\bar x$, written $v \in \partial f(\bar x)$, if there exists a sequence $x^\nu \xrightarrow[f]{} x$ and $v^\nu \to v$ with $v^\nu \in \hat \partial f(x^\nu)$. An $f$-attentive localization of $\partial f$ around $(\bar x, \bar v)$ is some set-valued mapping $T$ which has a graph containing all points $(x,v) \in \gph \partial f$ with $\| v- \bar v\| < \varepsilon$, $\|x - \bar x\| < \varepsilon$ and $f(x) < f(\bar x) + \varepsilon$ for some $\varepsilon > 0$.
    \end{definition}
\noindent We say that a function $f$ is differentiable at a point
$\bar x$ if there is some vector $v \in \RR^n$ such that
    \[
    f(x) = f(\bar x) + \langle v, x - \bar x \rangle + o(\|x-\bar x\|)
    \]
    where $o(t)$ indicates a term such that
$
    \tfrac{o(t)}{t} \to 0 \text{ as } t \to 0, t \ne 0$. Write $v=\nabla f(\bar x)$.

    \begin{fact} \label{fact:diffSubdifEq} \cite[Exercise 8.8] {Rockafellar}.
        If $f: \RR^n \to \bar \RR$ is differentiable at $\bar x$ then $\hat \partial f(\bar x)= \{\nabla f(\bar x)\}$.
    \end{fact}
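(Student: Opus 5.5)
The plan is to verify the two inclusions $\hat\partial f(\bar x)\supseteq\{\nabla f(\bar x)\}$ and $\hat\partial f(\bar x)\subseteq\{\nabla f(\bar x)\}$ directly from the definitions, writing $v:=\nabla f(\bar x)$.

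\emph{First inclusion.} By differentiability,
\[
f(x)-f(\bar x)-\langle v,x-\bar x\rangle=o(\|x-\bar x\|).
\]
Dividing by $\|x-\bar x\|$ shows the difference quotient in the definition of regular subgradient tends to $0$ as $x\to\bar x$, $x\neq\bar x$. Its $\liminf$ is therefore $0\ge 0$, so $v\in\hat\partial f(\bar x)$.

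\emph{Second inclusion.} Let $w\in\hat\partial f(\bar x)$ be arbitrary. Subtracting the differentiability expansion from the numerator gives
\[
\frac{f(x)-f(\bar x)-\langle w,x-\bar x\rangle}{\|x-\bar x\|}=\frac{\langle v-w,x-\bar x\rangle}{\|x-\bar x\|}+\frac{o(\|x-\bar x\|)}{\|x-\bar x\|}.
\]
Suppose $w\neq v$, and approach $\bar x$ along the ray $x_t=\bar x+t(w-v)/\|w-v\|$ with $t\downarrow 0$. Along this ray the first term equals the constant $-\|v-w\|<0$, and the second term tends to $0$. Hence the $\liminf$ of the quotient is at most $-\|v-w\|<0$, which contradicts $w\in\hat\partial f(\bar x)$. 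Therefore $w=v$.

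The only point needing care is that the $\liminf$ over all $x\to\bar x$ is bounded above by the limit along any particular sequence, which is what lets the ray argument conclude. Otherwise the argument is elementary; no earlier results from the paper are needed beyond the definitions of differentiability and of $\hat\partial f$.
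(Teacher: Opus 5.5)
Your proof is correct: for $v=\nabla f(\bar x)$ the difference quotient tends to $0$, and for $w\neq v$, testing along the ray in direction $(w-v)/\|w-v\|$ gives a $\liminf$ of at most $-\|w-v\|<0$, so no other regular subgradient exists. The paper gives no proof of its own and only cites Rockafellar--Wets, Exercise 8.8; your direct check from the definitions is the standard argument for it.
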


The key concept we need is the prox-regularity of a function.
    \begin{definition} (prox-regularity of functions).
        A function $f: \RR^n \to \bar \RR$ is prox-regular at $\bar x$ for $\bar v$ if $f$ is finite and locally lsc at $\bar x$ with $\bar v \in \partial f(\bar x)$, and there exists $\varepsilon > 0$ and $\rho \ge 0$ such that
        \begin{align*}
            f(x') \ge f(x) + \langle v, x' - x\rangle - \frac{\rho}{2} \|x' - x \|^2 \text{ for all } x' \in \BB_\varepsilon(\bar x) \\
            \text{when } v \in \partial f(x), \|v - \bar v\| < \varepsilon, \|x - \bar x \|< \varepsilon,
            f(x) < f(\bar x) + \varepsilon.
        \end{align*}
        When this holds for all $\bar v \in \partial f(\bar x)$, $f$ is said to be prox-regular at $\bar x$.
    \end{definition}

To make the Moreau envelope of a possibly nonconvex function well-defined, we require an associated definition.
   \begin{definition} (prox-bounded).
        A function $f:\RR^n \to \bar \RR$ is prox-bounded if there exists $\lambda > 0$ such that $e_\lambda f(x) > -\infty$ for some $x \in \RR^n$. The supremum of all such $\lambda$ is called the threshold $\lambda_f$ of prox-boundedness for $f$.
    \end{definition}

    \begin{fact} \label{fact:proxMapNonemptyCompact} \cite[Theorem 1.25]{Rockafellar}.
        Let $f: \RR^n \to \bar \RR$ be proper, lsc, and prox-bounded with threshold $\lambda_f > 0$. Then for any $\lambda \in (0, \lambda_f)$ and $x \in \RR^n$ the set $P_\lambda f(x)$ is nonempty and compact, and $e_\lambda f(x)$ is finite and depends continuously on $(\lambda, x)$ with $e_{\lambda}f(x)\uparrow f(x)$ for all $x$ as $\lambda\downarrow 0$. In particular, $P_\lambda f$ has a full domain, i.e., $\dom P_\lambda f = \RR^n$.
    \end{fact}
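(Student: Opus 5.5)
The plan is to treat $(w,x,\lambda)\mapsto h(w,x,\lambda):=f(w)+\frac{1}{2\lambda}\|w-x\|^2$ as a parametric minimization problem in $w$ with parameters $(x,\lambda)\in\RR^n\times(0,\lambda_f)$. All claims then follow from a uniform level-boundedness estimate together with the lower semicontinuity of $f$.

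\textbf{Step 1: a quadratic minorant.} Fix $\lambda_0\in(0,\lambda_f)$ and pick $\lambda_1\in(\lambda_0,\lambda_f)$ for which there is $x_1$ with $e_{\lambda_1}f(x_1)=:c>-\infty$. This $\lambda_1$ exists because $\lambda_f$ is a supremum. The definition of $e_{\lambda_1}f(x_1)$ then gives
\[
f(w)\ \ge\ c-\frac{1}{2\lambda_1}\|w-x_1\|^2\qquad\text{for all } w.
\]

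\textbf{Step 2: uniform level-boundedness.} For $\lambda\le\lambda_0$ and $x$ in a bounded set, adding the minorant to $\frac{1}{2\lambda}\|w-x\|^2$ gives
\[
h(w,x,\lambda)\ \ge\ c+\Big(\frac{1}{2\lambda_0}-\frac{1}{2\lambda_1}\Big)\|w\|^2-a\|w\|-b,
\]
with constants $a,b$ that are uniform over bounded sets of $x$. Since $\lambda_0<\lambda_1$, the coefficient of $\|w\|^2$ is strictly positive. Hence $h$ is level-bounded in $w$ locally uniformly in $(x,\lambda)$. Moreover $h$ is lsc jointly in $(w,x,\lambda)$, because $f$ is lsc and the quadratic term is continuous for $\lambda>0$. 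Properness of $f$ gives a point $\bar w$ with $f(\bar w)<\infty$, so $\inf_w h(w,x,\lambda)\le h(\bar w,x,\lambda)<\infty$.

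\textbf{Step 3: nonemptiness, compactness, finiteness.} For fixed $(x,\lambda)$, the function $h(\cdot,x,\lambda)$ is lsc, proper and level-bounded. Its level sets are therefore compact, and the infimum is attained and finite. The argmin set $P_\lambda f(x)$ equals a nonempty level set $\{w\mid h(w,x,\lambda)\le e_\lambda f(x)\}$, so it is compact. This also gives $\dom P_\lambda f=\RR^n$.

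\textbf{Step 4: continuity of $e_\lambda f$ in $(\lambda,x)$.} This is the main obstacle, since $f$ is only lsc. I would prove the two semicontinuities separately.

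\emph{Upper semicontinuity.} Each $(x,\lambda)\mapsto h(w,x,\lambda)$ is continuous for fixed $w$, and $e_\lambda f$ is an infimum of such functions, so it is usc.

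\emph{Lower semicontinuity.} Take $(x^\nu,\lambda^\nu)\to(x,\lambda)$ and choose $w^\nu\in P_{\lambda^\nu}f(x^\nu)$. Because $e_{\lambda^\nu}f(x^\nu)\le h(\bar w,x^\nu,\lambda^\nu)$ stays bounded above, the uniform estimate of Step 2 makes $\{w^\nu\}$ bounded. Passing to a subsequence realizing the $\liminf$, with $w^\nu\to w$, joint lsc of $h$ gives
\[
\liminf_\nu e_{\lambda^\nu}f(x^\nu)\ \ge\ h(w,x,\lambda)\ \ge\ e_\lambda f(x).
\]

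\textbf{Step 5: monotone convergence as $\lambda\downarrow0$.} Monotonicity is immediate: $\frac{1}{2\lambda}\|w-x\|^2$ increases as $\lambda$ decreases, so $e_\lambda f(x)$ is nondecreasing, and taking $w=x$ gives $e_\lambda f(x)\le f(x)$. For the limit, let $w_\lambda\in P_\lambda f(x)$. The minorant of Step 1 combined with $e_\lambda f(x)\le f(x)$ yields
\[
\|w_\lambda-x\|^2\ \le\ O(\lambda)\quad\text{whenever } f(x)<\infty,
\]
so $w_\lambda\to x$. Lower semicontinuity of $f$ then gives $\liminf_{\lambda\downarrow0}e_\lambda f(x)\ge\liminf_{\lambda\downarrow0} f(w_\lambda)\ge f(x)$, which together with the upper bound proves $e_\lambda f(x)\uparrow f(x)$.

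If $f(x)=\infty$, argue by contradiction. Suppose $e_\lambda f(x)$ stays bounded by some $M$. Then the same estimate still gives $\|w_\lambda-x\|^2=O(\lambda)$, so $w_\lambda\to x$ and $f(w_\lambda)\le M$ for all small $\lambda$. Lower semicontinuity would then force $f(x)\le M<\infty$, a contradiction. Hence $e_\lambda f(x)\uparrow\infty=f(x)$ in this case too.
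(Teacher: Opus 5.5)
Your argument is correct, and it is essentially the standard proof of this fact. The paper gives no proof of its own but cites Rockafellar--Wets, Theorem 1.25, where the same quadratic minorant coming from prox-boundedness is used to show that $f(w)+\frac{1}{2\lambda}\|w-x\|^2$ is lsc and level-bounded in $w$ locally uniformly in $(x,\lambda)$; their parametric minimization theorem (Theorem 1.17) then gives attainment, compactness, finiteness and continuity, which your Steps 3--5 unpack by hand, together with a direct treatment of the limit $\lambda\downarrow 0$.
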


Monotone mappings are ubiquitous in modern optimization and variational analysis; see, e.g., \cite{bauschke}, \cite{Rockafellar}.
       \begin{definition} (monotone mappings).
        A mapping $S : \RR^n \rightrightarrows \RR^n$ is monotone if, given any $x_0, x_1 \in \RR^n$, all $y_0 \in S(x_0)$, and all $y_1 \in S(x_1)$ we have
        $
        \langle y_1 - y_0, x_1 - x_0\rangle \ge 0
      $
    \end{definition}

   \begin{fact}  \label{fact:proxMonotone} \cite[Proposition 12.19]{Rockafellar}.
        For a proper, lsc function $f: \RR^n \to \bar \RR$ and any $\lambda > 0$, the proximal mapping $P_\lambda f:\RR^n \rightrightarrows \RR^n$ is monotone.
    \end{fact}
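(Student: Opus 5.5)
The plan is to compare the two minimization problems defining $P_\lambda f(x_0)$ and $P_\lambda f(x_1)$ directly, testing each minimizer in the other problem. Fix $x_0,x_1\in\RR^n$, $y_0\in P_\lambda f(x_0)$ and $y_1\in P_\lambda f(x_1)$. If either set is empty, the monotonicity inequality holds vacuously, so we may assume both are nonempty.

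First I check that $f(y_0)$ and $f(y_1)$ are finite, so that no $\infty-\infty$ occurs when inequalities are added. Since $f$ is proper, there is $\bar x$ with $f(\bar x)<+\infty$. Hence the infimum defining $e_\lambda f(x_i)$ is $<+\infty$, and it is attained at $y_i$, so $f(y_i)<+\infty$. Properness also gives $f(y_i)>-\infty$.

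Next, since $y_0$ minimizes $w\mapsto f(w)+\frac1{2\lambda}\|w-x_0\|^2$, testing with $w=y_1$ gives
\[
f(y_0)+\tfrac1{2\lambda}\|y_0-x_0\|^2\le f(y_1)+\tfrac1{2\lambda}\|y_1-x_0\|^2 .
\]
Symmetrically, testing the problem at $x_1$ with $w=y_0$ gives
\[
f(y_1)+\tfrac1{2\lambda}\|y_1-x_1\|^2\le f(y_0)+\tfrac1{2\lambda}\|y_0-x_1\|^2 .
\]
Adding these, the finite terms $f(y_0)$ and $f(y_1)$ cancel. Multiplying by $2\lambda$ leaves
\[
\|y_0-x_0\|^2+\|y_1-x_1\|^2\le\|y_1-x_0\|^2+\|y_0-x_1\|^2 .
\]

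Finally, expand each squared norm. All the terms $\|y_i\|^2$ and $\|x_i\|^2$ appear once on each side and cancel. What remains is
\[
-\langle y_0,x_0\rangle-\langle y_1,x_1\rangle\le-\langle y_1,x_0\rangle-\langle y_0,x_1\rangle,
\]
which rearranges to $\langle y_1-y_0,x_1-x_0\rangle\ge0$. This is the required monotonicity.

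No step is a real obstacle. The only point needing care is the finiteness of $f(y_i)$, which justifies cancelling the $f$-terms. Lower semicontinuity is not needed for this argument; it matters only for the nonemptiness of $P_\lambda f$, which is not at stake here.
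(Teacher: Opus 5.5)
Your proof is correct. Properness is exactly what makes $f(y_0)$ and $f(y_1)$ finite, so the two minimality inequalities can be added and the $f$-terms cancelled, and your expansion of the squared norms is right.

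The paper gives no proof of this Fact; it only cites \cite[Proposition 12.19]{Rockafellar}. The argument behind that result goes through convex analysis rather than a direct computation. Set
\[
\varphi(x):=\tfrac12\|x\|^2-\lambda\, e_\lambda f(x)=\sup_w\Big\{\langle w,x\rangle-\lambda f(w)-\tfrac12\|w\|^2\Big\}=\Big(\lambda f+\tfrac12\|\cdot\|^2\Big)^*(x).
\]
Then $\varphi$ is convex, being a supremum of affine functions of $x$. Moreover, $y\in P_\lambda f(x)$ says precisely that $y$ attains this supremum, hence $y\in\partial\varphi(x)$. Monotonicity of $P_\lambda f$ is then inherited from the subdifferential of a convex function.

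Your computation is this argument unwound. After multiplying by $\lambda$ and expanding, your two inequalities are exactly the subgradient inequalities $\varphi(x_0)\ge\varphi(x_1)+\langle y_1,x_0-x_1\rangle$ and $\varphi(x_1)\ge\varphi(x_0)+\langle y_0,x_1-x_0\rangle$. Their sum is the usual proof that $\partial\varphi$ is monotone.

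Your route is self-contained and makes it clear that lower semicontinuity plays no role. The conjugacy route costs a little machinery but yields the stronger structural fact $P_\lambda f\subseteq\partial\varphi$ with $\varphi$ convex. That inclusion gives cyclic monotonicity. It also gives single-valuedness of $P_\lambda f$ almost everywhere for $\lambda<\lambda_f$: there $\varphi$ is finite everywhere, and a finite convex function is differentiable almost everywhere.
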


    \begin{definition} (local boundedness).
        A mapping $S : \RR^n \rightrightarrows \RR^m$ is locally bounded on $\RR^n$ if it has the property that whenever $u^\nu \in S(x^\nu)$ and the sequence $\{x^\nu\}_{\nu \in \NN}$ is bounded then the sequence $\{u^\nu\}_{\nu \in \NN}$ is bounded.
    \end{definition}

    \begin{definition} (semicontinuity of set-valued mappings).
   For a set-valued mapping $S:\RR^n\rightrightarrows\RR^m$,  define its outer and inner limits at $\bar x$ as follows:
    \begin{align*}
        \limsup_{x \to \bar x} S(x)& := \left\{ u \mid \exists x^\nu \to \bar x, \exists u^\nu \to u \text{ with } u^\nu \in S(x^\nu) \right \}, \\
        \liminf_{x \to \bar x} S(x)& := \left\{ u \mid \forall x^\nu \to \bar x, \exists N \in {\mathcal N}_\infty ,\exists u^\nu \xrightarrow[]{} u \text{ with } \nu \in N, u^\nu \in S(x^\nu) \right \}.
    \end{align*}
    We say that $S$ is outer semicontinuous (osc) at $\bar x$ if
    $
    \limsup_{x \to \bar x} S(x) \sube S(\bar x),
    $
    inner semicontinuous (isc) at $\bar x$ if
    $
    \liminf_{x \to \bar x} S(x) \supseteq S(\bar x),
    $
    and  is continuous at $\bar x$ if both conditions hold.
    \end{definition}

    \begin{fact} \label{fact:oscIffGphClosed} \cite[Theorem 5.7(a)]{Rockafellar}.
        A mapping $S: \RR^n \rightrightarrows \RR^m$ is everywhere osc if and only if $\gph S$ is closed in $\RR^n\times\RR^m$.
    \end{fact}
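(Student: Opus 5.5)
We need to prove Fact~\ref{fact:oscIffGphClosed}: $S$ is osc at every point if and only if $\gph S$ is closed in $\RR^n\times\RR^m$. The argument is a direct unpacking of the sequential definition of the outer limit, handled one direction at a time.

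\emph{Closed graph implies osc.} Suppose $\gph S$ is closed. Fix $\bar x$ and take $u \in \limsup_{x\to\bar x} S(x)$. By definition there are sequences $x^\nu \to \bar x$ and $u^\nu \to u$ with $u^\nu \in S(x^\nu)$. Then $(x^\nu,u^\nu)\in\gph S$ and $(x^\nu,u^\nu)\to(\bar x,u)$. Closedness gives $(\bar x,u)\in\gph S$, so $u\in S(\bar x)$. This proves $\limsup_{x\to\bar x}S(x)\sube S(\bar x)$ at every $\bar x$.

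\emph{Osc everywhere implies closed graph.} Suppose $S$ is osc at every point. Let $(x^\nu,u^\nu)\in\gph S$ converge to $(\bar x,u)$. Then $x^\nu\to\bar x$, $u^\nu\to u$ and $u^\nu\in S(x^\nu)$, so $u\in\limsup_{x\to\bar x}S(x)$ by the definition of the outer limit. Osc at $\bar x$ yields $u\in S(\bar x)$, i.e. $(\bar x,u)\in\gph S$. Since $\RR^n\times\RR^m$ is a metric space, sequential closedness is the same as closedness, so $\gph S$ is closed.

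There is no real obstacle; the only point to check is the convention in the definition of $\limsup$. It requires $x^\nu\to\bar x$ but does not demand $x^\nu\neq\bar x$, and constant sequences are allowed. So the outer limit always contains $S(\bar x)$, and both directions above apply verbatim. In particular, the second direction places no restriction on whether the sequence $x^\nu$ hits $\bar x$.
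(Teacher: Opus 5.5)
Your proof is correct and is essentially the standard argument. The paper gives no proof of its own, only the citation \cite[Theorem 5.7(a)]{Rockafellar}, where the equivalence is likewise read off from the sequential definition: $\limsup_{x\to\bar x}S(x)$ is exactly the set of $u$ for which $(\bar x,u)$ is a limit of points of $\gph S$. You were right to flag that the definition does not exclude $x^\nu=\bar x$, since with a deleted limit the ``only if'' direction would fail (osc everywhere would no longer force $S(\bar x)$ itself to be closed).
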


    \begin{fact} \label{fact:proxOscLB} \cite[Example 5.23]{Rockafellar}.
        For any proper, lsc function $f: \RR^n \to \bar \RR$ that is prox-bounded with threshold $\lambda_f>0$
        and $\lambda \in (0, \lambda_f)$, its proximal mapping $P_{\lambda}f$ is everywhere osc and locally bounded.
    \end{fact}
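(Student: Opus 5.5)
The plan is to prove the two properties separately and directly from the definition of $P_\lambda f$. The only ingredients are lower semicontinuity of $f$ and a quadratic lower bound on $f$ that comes from prox-boundedness. Fix $\lambda\in(0,\lambda_f)$ and choose $\lambda'\in(\lambda,\lambda_f)$.

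\emph{Quadratic minorant.} By the definition of the threshold, together with Fact~\ref{fact:proxMapNonemptyCompact}, $e_{\lambda'}f(x_0)$ is finite for some (indeed any) $x_0$. Hence
\[
f(w)\ge c-\tfrac{1}{2\lambda'}\|w-x_0\|^2 \quad\text{for all } w, \qquad\text{where } c:=e_{\lambda'}f(x_0)>-\infty .
\]

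\emph{Local boundedness.} Since $f$ is proper, fix $\hat x$ with $f(\hat x)<\infty$. Let $\{x^\nu\}$ be bounded and $w^\nu\in P_\lambda f(x^\nu)$. Comparing the minimizer $w^\nu$ with the competitor $\hat x$ gives
\[
f(w^\nu)+\tfrac{1}{2\lambda}\|w^\nu-x^\nu\|^2\le f(\hat x)+\tfrac{1}{2\lambda}\|\hat x-x^\nu\|^2 .
\]
The right-hand side is bounded uniformly in $\nu$. Inserting the minorant on the left yields
\[
\tfrac{1}{2\lambda}\|w^\nu-x^\nu\|^2-\tfrac{1}{2\lambda'}\|w^\nu-x_0\|^2\le C .
\]
Because $1/\lambda>1/\lambda'$ and $x^\nu$ stays bounded, the left side grows like $\bigl(\tfrac{1}{2\lambda}-\tfrac{1}{2\lambda'}\bigr)\|w^\nu\|^2$ minus terms linear in $\|w^\nu\|$. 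This forces $\{w^\nu\}$ to be bounded. The one real point of the argument is this coercivity step, which is exactly where $\lambda<\lambda_f$ is used.

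\emph{Outer semicontinuity.} By Fact~\ref{fact:oscIffGphClosed} it suffices to show that $\gph P_\lambda f$ is closed. Take $x^\nu\to x$ and $w^\nu\in P_\lambda f(x^\nu)$ with $w^\nu\to w$. For every $u\in\RR^n$,
\[
f(w^\nu)+\tfrac{1}{2\lambda}\|w^\nu-x^\nu\|^2\le f(u)+\tfrac{1}{2\lambda}\|u-x^\nu\|^2 .
\]
Take the liminf as $\nu\to\infty$ and use lsc of $f$ on the left and continuity of the norm on both sides. This gives
\[
f(w)+\tfrac{1}{2\lambda}\|w-x\|^2\le f(u)+\tfrac{1}{2\lambda}\|u-x\|^2 .
\]
It also shows $f(w)<\infty$, by choosing $u=\hat x$. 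Since $u$ was arbitrary, $w\in P_\lambda f(x)$. Hence the graph is closed and $P_\lambda f$ is everywhere osc.
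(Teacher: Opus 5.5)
Your proof is correct. The paper gives no argument of its own for this statement; it imports it from Rockafellar--Wets. There, outer semicontinuity and local boundedness of $P_\lambda f$ come from their general parametric-minimization machinery. The key input is that $(w,x)\mapsto f(w)+\frac{1}{2\lambda}\|w-x\|^2$ is level-bounded in $w$ locally uniformly in $x$, which is the engine behind Fact~\ref{fact:proxMapNonemptyCompact}. In its original form that theorem also states that proximal points $w^\nu\in P_{\lambda^\nu}f(x^\nu)$ with $x^\nu\to\bar x$, $\lambda^\nu\to\lambda\in(0,\lambda_f)$ stay bounded and cluster in $P_\lambda f(\bar x)$.

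Your argument is a self-contained specialization of that route. The quadratic minorant built from $\lambda'\in(\lambda,\lambda_f)$ is exactly what supplies the uniform level-boundedness, and your explicit coercivity estimate replaces the appeal to the general theorem. What your version buys is transparency: it shows that outer semicontinuity needs only properness and lower semicontinuity, and so holds for every $\lambda>0$. It also shows that $\lambda<\lambda_f$ enters solely through the coercivity step. What the general route buys is joint control in $(x,\lambda)$, together with continuity of $e_\lambda f$.

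One small improvement: to get $e_{\lambda'}f(x_0)>-\infty$ you need not invoke Fact~\ref{fact:proxMapNonemptyCompact}, whose source already contains the boundedness statement you are proving. It follows directly from the definition of $\lambda_f$ and the fact that $\lambda\mapsto e_\lambda f(x)$ is nonincreasing, and using that keeps the argument visibly non-circular.
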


         \begin{definition} (Lipschitz set-valued mappings). \label{d:lipset}
        A mapping $S : \RR^n \rightrightarrows \RR^n$ is Lipschitz continuous on $X \sube \RR^n$ if it is nonempty-closed-valued on $X$ and there exists $\kappa > 0$ such that
        \[
        S(x') \sube S(x) + \kappa \|x ' - x\| \BB
        \]
        for all $x,x' \in X$.
    \end{definition}

    It is not hard to show that if a mapping $S: \RR^n \rightrightarrows \RR^n$ is Lipschitz continuous then it is continuous.  For a monotone mapping with non-empty values on an open set,
    it turns out that its Lipschitz continuity in the sense of
    Definition~\ref{d:lipset} forces the mapping to be single-valued, thus, reduces to the Lipschitz continuity
    of a single-valued mapping.

    \begin{lemma}\label{l:singlevalue}
     Suppose that $S:\RR^n\rightrightarrows\RR^n$ is monotone and $\bar x\in\inte\dom S$.
    If there exists $\varepsilon>0$ such that $S$ is Lipschitz on $\BB_{\varepsilon}(\bar x)\subset\inte\dom S$
    in the sense of Definition~\ref{d:lipset},
    then $S$ is single-valued on $\BB_{\varepsilon}(\bar x)$.
    \end{lemma}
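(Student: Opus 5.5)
Fix $x\in\BB_\varepsilon(\bar x)$ and suppose $y_1,y_2\in S(x)$. The plan is to test monotonicity at points $x'=x+t d$ near $x$, with $d$ a suitably chosen unit direction and $t>0$ small. For every such $t$ the point $x'$ still lies in the open ball $\BB_\varepsilon(\bar x)$. Lipschitz continuity in the sense of Definition~\ref{d:lipset}, applied with the roles of $x$ and $x'$ exchanged, gives $S(x)\sube S(x')+\kappa t\BB$. Hence for each $i=1,2$ there is $y_i^t\in S(x')$ with $\|y_i^t-y_i\|\le\kappa t$; nonemptiness of $S(x')$ is part of the definition.

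Next apply monotonicity between $(x',y_2^t)$ and $(x,y_1)$, which gives $\langle y_2^t-y_1, t d\rangle\ge0$. Dividing by $t>0$ and letting $t\downarrow0$ (so $y_2^t\to y_2$) yields $\langle y_2-y_1,d\rangle\ge 0$. Running the same argument with $x'=x-td$ gives $\langle y_2-y_1,d\rangle\le0$, so $\langle y_2-y_1,d\rangle=0$. Since this holds for every direction $d$, we get $y_2=y_1$.

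That one-line argument is suspicious, however: it uses only the monotone pair $(x,y_1)$ together with points near $y_2$, and never needs the two values to interact in any special way. Let me check it carefully. Monotonicity applies to any $y_0\in S(x_0)$ and $y_1\in S(x_1)$. Here we take $x_0=x$, $y_0=y_1\in S(x)$, and $x_1=x'$ with $y_2^t\in S(x')$. This is legitimate, so the argument stands. Its content is that Lipschitz continuity makes every element of $S(x)$ approximable by elements of $S(x\pm td)$. Monotonicity against the fixed element $y_1$ then forces $\langle y-y_1,\pm d\rangle\ge0$ in the limit, for every $y\in S(x)$.

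The only delicate point, and the one I would write out carefully, is the direction of the Lipschitz inclusion: we need $S(x)\sube S(x')+\kappa\|x-x'\|\BB$, not the reverse. Definition~\ref{d:lipset} holds for all ordered pairs in $X$, so both directions are available. Closedness of the values is not actually needed. Finally, taking $d=(y_2-y_1)/\|y_2-y_1\|$ when $y_1\neq y_2$ already gives the contradiction $\|y_2-y_1\|\le0$, which completes single-valuedness on $\BB_\varepsilon(\bar x)$.
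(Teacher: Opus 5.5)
Your argument is correct, and it takes a genuinely different route from the paper.

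The paper's proof works in two steps. First it invokes the density theorem for monotone operators (\cite[Theorem 21.27]{bauschke}) to obtain single-valued points $x^\nu\to x$ inside $\BB_\varepsilon(\bar x)$. Then it uses the inclusion $S(x)\sube S(x^\nu)+\kappa\|x-x^\nu\|\BB$, with $S(x^\nu)$ a singleton, to squeeze $\mbox{diam}(S(x))\le 2\kappa\|x-x^\nu\|\to 0$. Monotonicity enters only through the density theorem.

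You use the same direction of the Lipschitz inclusion, but at the points $x\pm td$. You then feed the approximating values $y_2^t$ directly into the monotonicity inequality against the fixed $y_1\in S(x)$, which gives $\langle y_2-y_1,\pm d\rangle\ge 0$ in the limit. Your self-check is right: pairing $(x,y_1)$ with $(x\pm td,y_2^t)$ is a legitimate use of monotonicity.

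This route is elementary and self-contained, and it is also more general. You never use the modulus $\kappa$ or the closedness of the values. You use only that each $y\in S(x)$ is a limit of elements of $S(x\pm td)$ as $t\downarrow 0$. In effect you have proved a pointwise fact: a monotone mapping is single-valued at every point of $\inte\dom S$ where it is inner semicontinuous. This needs no Lipschitz property on a whole ball.

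What the paper's route gives in exchange is brevity, given the cited theorem, and an explicit diameter bound. However, it rests on a much deeper result than this lemma actually requires.
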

    \begin{proof}
  Since $S$ is monotone, by \cite[Theorem 21.27]{bauschke} there exists a dense set $C$ of $\BB_{\varepsilon}(\bar x)$ such that
  $S(x)$ is a singleton for every $x\in C$. Let $x\in \BB_{\varepsilon}(\bar x)$.
  Then there exists a sequence $\{x^{\nu}\}_{\nu\in\NN}$ from $C$ such that $S(x^{\nu})$ is a singleton
  and $\{x^{\nu}\}_{\nu\in\NN}$ converges to $x$. By the Lipschitz continuity of $S$, there exists $\kappa>0$
  such that
  $S(x)\subset S(x^{\nu})+\kappa\|x-x^{\nu}\|\BB$, which implies
  $\mbox{diam}(S(x))\leq 2\kappa\|x-x^{\nu}\|$. Taking limit as $\nu\rightarrow\infty$ gives $\mbox{diam}(S(x))=0$, so
  $S(x)$ is a singleton. Since $x \in \BB_{\varepsilon}(\bar x)$ was arbitrary, we conclude that
  $S$ is single-valued on $\BB_{\varepsilon}(\bar x)$.
    \end{proof}
 \begin{corollary}
 Let $f: \RR^n \to \bar \RR$ be proper, lsc, and prox-bounded with threshold $\lambda_f>0$, and let
 $\lambda \in (0, \lambda_f)$. If its proximal mapping $P_{\lambda}f$ is Lipschitz on an open set
 in the sense of Definition~\ref{d:lipset}, then $P_{\lambda}f$ is single-valued on the open set,
 thus Lipschitz in the usual sense.
 \end{corollary}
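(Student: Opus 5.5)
The plan is to reduce the corollary to Lemma~\ref{l:singlevalue}, using the facts already collected about proximal mappings to check its hypotheses. Let $O\sube\RR^n$ be the open set on which $P_\lambda f$ is Lipschitz in the sense of Definition~\ref{d:lipset}, with constant $\kappa>0$.

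First, by Fact~\ref{fact:proxMonotone}, $P_\lambda f$ is monotone, since $f$ is proper and lsc. Second, by Fact~\ref{fact:proxMapNonemptyCompact}, since $\lambda\in(0,\lambda_f)$, $P_\lambda f(x)$ is nonempty and compact for every $x$. Hence $\dom P_\lambda f=\RR^n$, so $\inte\dom P_\lambda f=\RR^n$, and every point of $O$ lies in $\inte\dom P_\lambda f$.

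Next, fix any $\bar x\in O$. Since $O$ is open, choose $\varepsilon>0$ with $\BB_\varepsilon(\bar x)\sube O$. The Lipschitz inclusion $P_\lambda f(x')\sube P_\lambda f(x)+\kappa\|x'-x\|\BB$ holds for all $x,x'\in O$, so in particular on $\BB_\varepsilon(\bar x)$. The values there are nonempty and closed (indeed compact). Lemma~\ref{l:singlevalue} then gives that $P_\lambda f$ is single-valued on $\BB_\varepsilon(\bar x)$, and in particular at $\bar x$. Since $\bar x\in O$ was arbitrary, $P_\lambda f$ is single-valued on all of $O$.

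Finally, write $P_\lambda f(x)=\{p(x)\}$ for $x\in O$. The set-valued Lipschitz inclusion becomes $\{p(x')\}\sube\{p(x)\}+\kappa\|x'-x\|\BB$, i.e.\ $\|p(x')-p(x)\|\le\kappa\|x'-x\|$ for all $x,x'\in O$. This is ordinary Lipschitz continuity with the same constant. (If $\BB$ is read as the open ball the inequality is strict for $x\neq x'$, which is only stronger.) There is no real obstacle here. The only point needing care is the localization: Lemma~\ref{l:singlevalue} is stated on balls, so one covers the open set $O$ by balls contained in it and uses full domain from Fact~\ref{fact:proxMapNonemptyCompact} to meet the interior-of-domain hypothesis.
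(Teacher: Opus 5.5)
Your proposal is correct and follows the paper's proof. The paper simply combines Facts~\ref{fact:proxMapNonemptyCompact} and \ref{fact:proxMonotone} with Lemma~\ref{l:singlevalue}; you additionally spell out two steps it leaves implicit: covering the open set by balls, and translating single-valuedness into the usual Lipschitz inequality.
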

\begin{proof}
Combine Facts~\ref{fact:proxMapNonemptyCompact}, \ref{fact:proxMonotone}, and Lemma~\ref{l:singlevalue}.
\end{proof}

\section{Analysis of proximal mapping of Rockafellar-Wets example} \label{sect:proxExample}
  We begin with a general result about inner semicontinuity (isc) of a set-valued mapping, which is of independent
  interest.

    \begin{theorem}\label{_NecessaryConContGen}
        Let $S : \RR^n \rightrightarrows \RR^n$ be osc, locally bounded, and have a convex domain. Suppose there exist nonempty subsets of $\gph S$, $A$ and $B$, which are disjoint from each other's closure, that is $A \cap \bar B = \bar A \cap B = \varnothing$, and a point, $x_0 \in \dom S$, such that $\gph S = A \cup B$ and $\{(x_0,v) \mid v \in S(x_0)\} \sube A$. Then $S$ is not isc. Moreover, there exists a point where $S$ is multivalued.
    \end{theorem}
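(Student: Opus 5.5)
The plan is to reduce everything to a connectedness argument along the segment joining $x_0$ to a point whose values lie in $B$. Since $B \ne \varnothing$, pick $(x_1,v_1) \in B$, so $x_1 \in \dom S$. Because $\dom S$ is convex, every point $x_t := (1-t)x_0 + t x_1$ with $t \in [0,1]$ lies in $\dom S$, so $S(x_t) \ne \varnothing$. Define
\[
T_A := \{ t \in [0,1] \mid \exists v,\ (x_t,v) \in A\}, \qquad T_B := \{ t \in [0,1] \mid \exists v,\ (x_t,v) \in B\}.
\]
Since $\gph S = A \cup B$ and $S(x_t)\neq\varnothing$, we have $T_A \cup T_B = [0,1]$. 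Moreover $0 \in T_A$, and $0 \notin T_B$ because $\{(x_0,v)\mid v\in S(x_0)\} \sube A$ and $A \cap B = \varnothing$ (which follows from $A\cap \bar B=\varnothing$). Also $1 \in T_B$.

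The key step is to show that $T_A$ and $T_B$ are closed. Take $t_k \to t$ with $t_k \in T_A$ and witnesses $(x_{t_k},v_k) \in A$. By local boundedness, $\{v_k\}$ is bounded, so along a subsequence $v_k \to v$. Because $S$ is osc, Fact~\ref{fact:oscIffGphClosed} shows that $\gph S$ is closed, so $(x_t,v) \in \gph S$. Also $(x_t,v)\in \bar A$, and $\bar A \cap B = \varnothing$, so $(x_t,v) \in A$ and hence $t\in T_A$. The same argument with the roles of $A$ and $B$ swapped, using $A \cap \bar B = \varnothing$, shows that $T_B$ is closed. I expect this to be the main point needing care: local boundedness and closedness of the graph are exactly what make the limit land in $A$ rather than in $B$.

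Now set $t' := \min T_B$, which exists because $T_B$ is closed and nonempty; we have $t'>0$ since $0 \notin T_B$. For $t \in [0,t')$ we have $t\notin T_B$, so $t \in T_A$. Closedness of $T_A$ then gives $t' \in T_A$. Hence $S(x_{t'})$ contains some $u$ with $(x_{t'},u)\in A$ and some $v$ with $(x_{t'},v)\in B$. Since $A\cap B=\varnothing$, we get $u\ne v$, so $S$ is multivalued at $x_{t'}$.

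To show that $S$ is not isc, take the sequence $x^\nu := x_{t'-1/\nu} \to x_{t'}$ (for $\nu$ large). For each such $\nu$, $t'-1/\nu\notin T_B$, so every value $u^\nu \in S(x^\nu)$ satisfies $(x^\nu,u^\nu) \in A$. If $u^\nu \to v$ along some $N\in\mathcal N_\infty$, then $(x_{t'},v)\in\bar A$, which is impossible because $(x_{t'},v)\in B$ and $\bar A\cap B=\varnothing$. Thus $v \in S(x_{t'})$ but $v\notin \liminf_{x\to x_{t'}} S(x)$, so $S$ fails to be isc at $x_{t'}$.
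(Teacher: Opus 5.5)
Your proof is correct and follows essentially the same route as the paper: your $t'=\min T_B$ is exactly the paper's $\bar\lambda$ (the first parameter on the segment at which some value leaves $A$), and your closedness of $T_A$ and $T_B$ is the paper's use of the closedness of $A$ and $B$, together with local boundedness, to pass to limits from the left and from the right of $\bar\lambda$. Your version is slightly tighter, since you check $t'>0$ and approach $t'$ strictly from below, whereas the paper's left sequence $\lambda^\nu_-\le\bar\lambda$ must be taken strictly below $\bar\lambda$ for its claim that those graph points lie in $A$ to hold.
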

    \begin{proof}
        Observe that, by Fact~\ref{fact:oscIffGphClosed}, $\gph S$ is closed because $S$ is osc. If $x^\nu \xrightarrow[A]{} x$ then $x \in \gph S$ so $x \in A$ or $x \in B$ and since $\bar A \cap B = \varnothing$ we must have $x \in A$ so $A$ is closed. Similarly, $B$ is closed.

        Take $x_1$ such that $\{(x_1,v) \mid v \in S(x_1)\} \cap B \ne \varnothing$. $x_1$ exists because $B$ is a nonempty subset of $\gph S$. Define
        \[
        x_\gamma := (1-\gamma)x_0 + \gamma x_1 \quad \gamma \in [0,1].
        \]
        Observe that $x_\gamma \in \dom S$ for all $\lambda \in [0,1]$ because $\dom S$ is convex. Take
        \[
        \bar \lambda = \sup\Big\{\lambda\in [0,1]  \mid \forall \gamma \in [0, \lambda], \{ (x_\gamma, v) \mid v \in S(x_\gamma) \} \sube A \Big\}.
        \]
        Such a $\bar \lambda$ is finite because $\lambda = 0$ is in the set. Take the sequence $\lambda^\nu_- \to \bar \lambda$ with $\lambda^\nu_- \le \bar \lambda$. Then $x_{\lambda^\nu_-}$ converges to $x_{\bar \lambda}$ since $x_\lambda$ is continuous relative to $\lambda$. Since $x_{\lambda^\nu_-}$ converges it is bounded. So, by local boundedness, any sequence $y^\nu \in S(x_{\lambda^\nu_-})$ is bounded. Take a subsequence if necessary so that $y^\nu$ converges to some $\bar y$. Now, $(x_{\lambda^\nu_-}, y^\nu) \xrightarrow[{A}]{} (x_{\bar \lambda}, \bar y)$ and because $A$ is closed we have $(x_{\bar \lambda}, \bar y) \in A$. Now, by the way $\bar \lambda$ is defined, there exists a sequence $\lambda^\nu_+ \to \bar \lambda$ with $\lambda^\nu_+ \ge \bar \lambda$ such that
        \[
        \{ (x_{\lambda^\nu_+}, v) \mid v \in S(x_{\lambda_+^\nu}) \} \not \sube A.
        \]
        But, because $A$ and $B$ cover $\gph S$, we must have
        \[
        \{ (x_{\lambda^\nu_+}, v) \mid v \in S(x_{\lambda_+^\nu}) \} \cap B \ne \varnothing.
        \]
        Take $u^\nu \in S(x_{\lambda^\nu_+})$ such that $(x_{\lambda^\nu_+}, u^\nu) \in B$. Now, by similar argument as for $x_{\lambda^\nu_-}$, we have $(x_{\lambda^\nu_+}, u^\nu) \xrightarrow[B]{} (x_{\bar \lambda}, \bar u)$ and because $B$ is closed, $(x_{\bar \lambda}, \bar u) \in B$. Recall $S$ is isc at $\bar x$ if and only if
        \[
            \liminf_{x \to \bar x} S(x) \supseteq S(\bar x)
        \]
        with
        \[
        \liminf_{x \to \bar x} S(x) = \left \{ u \mid \forall x^\nu \to \bar x, \exists N \in {\cal N_\infty}, u^\nu \xrightarrow[N]{} u, \text{ with } u^\nu \in S(x^\nu) \right\}.
        \]
        Let $\bar x = x_{\bar \lambda}$. Now, for any choice of $y^\nu \to \bar y$ with $y^\nu \in S(x_{\lambda^\nu_-})$, we have $(x_{\bar \lambda}, \bar y) \in A$ and there exists $u^\nu \to \bar u$ with $u^\nu \in S(x_{\lambda^\nu_+})$ such that $(x_{\bar \lambda}, \bar u) \in B$. $A$ and $B$ are disjoint so we must have $\bar y \ne \bar u$. So $\bar u \not \in \liminf_{x \to x_{\bar \lambda}} S(x_{\bar \lambda})$ but $(x_{\bar \lambda}, \bar u) \in B \sube\gph S$ giving $\bar u \in S(x_{\bar \lambda})$. Therefore, $S$ is not isc at $x_{\bar \lambda}$. Moreover, for any given choice of $y^\nu \in S(x_{\lambda^\nu_-})$ with $y^\nu \to \bar y$, we have $\{\bar y, \bar u\} \sube S(x_{\bar \lambda})$ with $\bar y \ne \bar u$ so $S$ is multivalued at $x_{\bar \lambda}$.
    \end{proof}
    \begin{corollary}\label{_NecessaryConContRR}
        Let $S : \RR^n \rightrightarrows \RR^n$ be osc, locally bounded, and have convex domain. Suppose there exist nonempty subsets of $\gph S$, $A$ and $B$, which are disjoint from each other's closure, and points $x_1, x_0 \in \dom S$, such that $\gph S = A \cup B$, $\{(x_0, v) \mid v \in S(x_0)\} \sube A$, and $\{(x_1, v) \mid v \in S(x_1)\} \cap B \ne \varnothing$. Then, there exists a point $\bar x$ on the line segment connecting $x_0$ and $x_1$ such that $S$ is multivalued and not isc at $\bar x$.
    \end{corollary}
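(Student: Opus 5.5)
The plan is to rerun the argument of Theorem~\ref{_NecessaryConContGen} with the point $x_1$ now given in the hypotheses, and check that the bad point it produces lies on $[x_0,x_1]$. The statement follows the proof of that theorem almost verbatim; the only difference is that $x_1$ is prescribed.

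\textbf{Setup.} By Fact~\ref{fact:oscIffGphClosed}, $\gph S$ is closed. Since $\gph S=A\cup B$ and $\bar A\cap B=A\cap\bar B=\varnothing$, both $A$ and $B$ are closed. Parametrize the segment by $x_\gamma:=(1-\gamma)x_0+\gamma x_1$ for $\gamma\in[0,1]$. Convexity of $\dom S$ gives $S(x_\gamma)\ne\varnothing$ for every $\gamma$. Put
\[
\bar\lambda:=\sup\{\lambda\in[0,1]\mid \{x_\gamma\}\times S(x_\gamma)\sube A \text{ for all } \gamma\in[0,\lambda]\}.
\]
The value $\lambda=0$ belongs to this set by hypothesis, so $\bar\lambda\in[0,1]$ is well defined. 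We then set $\bar x:=x_{\bar\lambda}$, which lies on the segment.

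\textbf{Obtaining a point of $A$ over $\bar x$.} Take $\lambda^\nu_-\uparrow\bar\lambda$ inside the defining set; if $\bar\lambda=0$, use the constant sequence. Pick $y^\nu\in S(x_{\lambda^\nu_-})$. Local boundedness lets us pass to a subsequence with $y^\nu\to\bar y$. Since each $(x_{\lambda^\nu_-},y^\nu)\in A$ and $A$ is closed, $(\bar x,\bar y)\in A$.

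\textbf{Obtaining a point of $B$ over $\bar x$.} This is the one step needing care beyond the theorem's proof. We must show that there are $\lambda^\nu_+\to\bar\lambda$ with $\lambda^\nu_+\ge\bar\lambda$ and $\{x_{\lambda^\nu_+}\}\times S(x_{\lambda^\nu_+})\not\sube A$.
\begin{itemize}
\item If $\bar\lambda=1$, the constant sequence $\lambda^\nu_+=1$ works, because $\{x_1\}\times S(x_1)$ meets $B$ by hypothesis.
\item If $\bar\lambda<1$, such a sequence exists by the definition of the supremum. If the fiber over $x_{\bar\lambda}$ itself is not contained in $A$, the constant sequence $\lambda^\nu_+=\bar\lambda$ also serves.
\end{itemize}
In every case, since $A\cup B=\gph S$, we can choose $u^\nu\in S(x_{\lambda^\nu_+})$ with $(x_{\lambda^\nu_+},u^\nu)\in B$. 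Local boundedness and closedness of $B$ then give a limit $\bar u$ along a subsequence with $(\bar x,\bar u)\in B$.

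\textbf{Conclusion.} Because $A\cap B=\varnothing$, we have $\bar y\ne\bar u$, and both lie in $S(\bar x)$. Hence $S$ is multivalued at $\bar x$.

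For the failure of isc, the plan is to show $\bar u\notin\liminf_{x\to\bar x}S(x)$, which together with $\bar u\in S(\bar x)$ gives the claim. Suppose instead that $\bar u$ belongs to the inner limit. Then along $x_{\lambda^\nu_-}\to\bar x$ there are $w^\nu\in S(x_{\lambda^\nu_-})$ with $w^\nu\to\bar u$. Each $(x_{\lambda^\nu_-},w^\nu)$ lies in $A$, and $A$ is closed, so $(\bar x,\bar u)\in A$. This contradicts $(\bar x,\bar u)\in B$ and $A\cap B=\varnothing$.

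There is a degenerate case. If $\bar\lambda=0$ and the fiber over $x_0$ lies entirely in $A$, the left-hand sequence is constant and proves nothing. In that case we would instead show that $\bar y$ is not in the inner limit, using the right-hand sequence:
\begin{itemize}
\item Take $\lambda^\nu_+\downarrow 0$ with fibers meeting $B$ (available by the definition of the supremum, since here $\bar\lambda=0<1$).
\item If some $w^\nu\in S(x_{\lambda^\nu_+})$ converged to $\bar y$, then closedness of $B$ would force the $w^\nu$ to lie eventually in $A$, because $B$ is closed and $(\bar x,\bar y)\notin B$. Those are legitimate points of $\gph S$ and give no contradiction.
\item So this variant fails, and the first-sequence argument must be arranged so that the left-approach sequence is genuinely nonconstant, or else the isc failure must be obtained at $\bar u$ approached from the left.
\end{itemize}
This degenerate $\bar\lambda=0$ situation is where I expect the real difficulty to lie. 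I would treat it separately, checking whether the hypotheses force $\bar\lambda>0$ or whether, when $\bar\lambda=0$, the fiber over $x_0$ already lies in $A$ so that $\bar x=x_0$ cannot carry a point of $B$.
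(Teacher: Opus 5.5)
Your approach is the paper's: rerun the proof of Theorem~\ref{_NecessaryConContGen} with $x_1$ prescribed and take $\bar x=x_{\bar\lambda}$. The body of your argument is correct, namely closedness of $A$ and $B$, the limit points $\bar y$ and $\bar u$, multivaluedness, and the contradiction showing $\bar u\notin\liminf_{x\to\bar x}S(x)$. Choosing $\lambda^\nu_-$ inside the defining set, and treating $\bar\lambda=1$ via the hypothesis on $x_1$, is the right way to make the paper's version precise. The only problem is your closing ``degenerate case'': it is not a real difficulty, and you should close it rather than leave it open.

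First, your isc argument never uses that the left sequence is nonconstant. If $\lambda^\nu_-\equiv 0$, then $w^\nu\in S(x_0)$ with $w^\nu\to\bar u$ still forces $(x_0,\bar u)\in A$ by closedness of $A$, contradicting $(x_0,\bar u)\in B$. So it does not ``prove nothing''.

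Second, the case $\bar\lambda=0$ cannot occur at all. Your own step producing a point of $B$ over $\bar x$ would, when $\bar\lambda=0$, give $(x_0,\bar u)\in B$. That is impossible, since $\{x_0\}\times S(x_0)\subseteq A$ and $A\cap B=\varnothing$.

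The same limiting argument shows more generally that $\bar\lambda$ never belongs to its defining set. So the fiber over $\bar x$ always meets $B$; your sub-case ``if the fiber over $x_{\bar\lambda}$ is not contained in $A$'' is in fact always the case. Hence $\bar\lambda>0$, and you may take $\lambda^\nu_-<\bar\lambda$ strictly. Replace the final discussion by this one-line observation and the proof is complete.
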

    \begin{proof}
        Take $\bar x = (1-\bar \lambda)x_0 + \bar \lambda x_1$ with
        \begin{align*}
        \bar \lambda &= \sup\Big\{\lambda\in [0,1]  \mid \forall \gamma \in [0, \lambda], \{ (x_\gamma, v) \mid v \in S(x_\gamma) \} \sube A \Big\}, \\
        \text{ and }x_\gamma & := (1-\gamma)x_0 + \gamma x_1.
        \end{align*}
        Then $\bar x$ is on the line segment connecting $x_0$ and $x_1$ and the proof of Theorem \ref{_NecessaryConContGen} shows that $S$ is multivalued and not isc at $\bar x$.
    \end{proof}

Let us now analyze the proximal mapping $P_{\lambda}f$ of the Rockafellar-Wets function $f$ given by \eqref{e:rock-wets}.
 Our goal is to show that
  in every neighborhood of $0$, there exists a point $x$ such that $P_{\lambda}f(x)$ is multivalued.
Since $f$ is continuous and bounded below,
 we have $\lambda_{f}=+\infty$.  By Fact~\ref{fact:proxOscLB} and Fact~\ref{fact:proxMapNonemptyCompact},
 for every $\lambda>0$,
   $P_\lambda f : \RR \rightrightarrows \RR$ is osc, locally bounded, and has a full domain.
  Therefore, in any neighborhood of $0$ it suffices to find a separation of $\gph P_\lambda f$ such that there are $x_0$ and $x_1$ in the neighborhood that are as defined in the requirements of Corollary~\ref{_NecessaryConContRR}.

    \begin{figure}[h]
        \centering
        \includegraphics[width=0.8\linewidth]{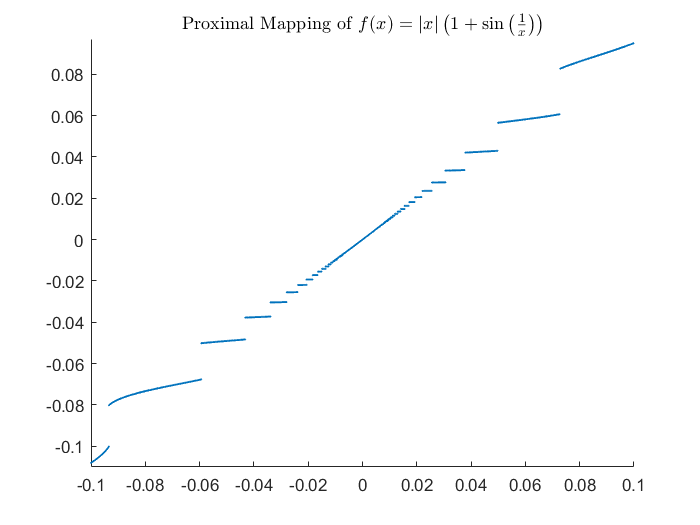}
        \caption{Plot of $P_\lambda f$ about the origin with $\lambda = 2^{-10}$. Observe the gaps in the graph.}
        \label{fig:proxMapAboutZero}
    \end{figure}

    To these ends we define the following two sequences.
        \[
    \ell_n = \frac{2}{\pi\left(4n + 3 \right)}, \quad m_n = \frac{2}{\pi\left(4 n + 1\right)} \quad \text{ for } n \in \NN.
    \]

    \begin{figure}[h]
        \centering
        \includegraphics[width=0.8\linewidth]{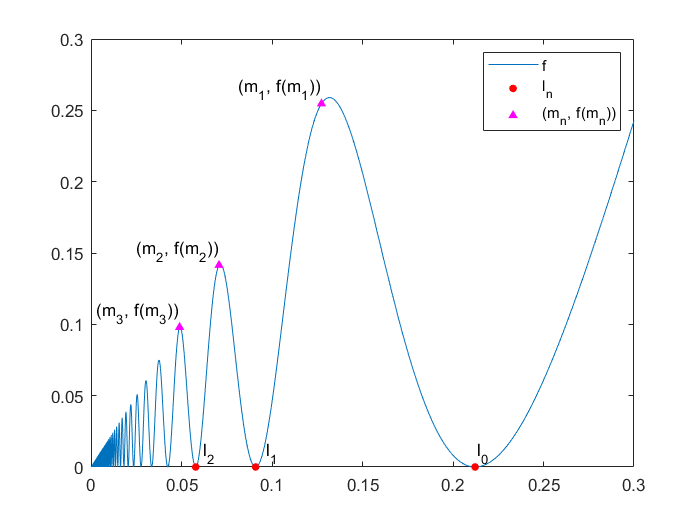}
        \caption{Plot of $f$ with selected values of $\ell_n$ and $m_n$ labeled.}
        \label{fig:fPlotLabled}
    \end{figure}

    \begin{lemma}
        $\ell_n$ and $m_n$ have the following properties:
        \begin{enumerate}[(i)]
            \item $\lim_{n \to \infty} \ell_n = \lim_{n \to \infty} m_n = 0$.

            \item $f(\ell_n) = 0$.

            \item $f(m_n) = 2m_n$.

            \item For all $n \in \NN$, $P_\lambda f(\ell_n) = \{\ell_n\}$.
        \end{enumerate}
    \end{lemma}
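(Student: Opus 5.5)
Items (i)–(iii) follow from direct substitution, and (iv) follows from a global lower bound on $f$.

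\emph{(i)} Both denominators $\pi(4n+3)$ and $\pi(4n+1)$ tend to $+\infty$, so $\ell_n\to0$ and $m_n\to0$.

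\emph{(ii)} Since $\ell_n>0$, we have $1/\ell_n=\pi(4n+3)/2=2\pi n+3\pi/2$. Hence $\sin(1/\ell_n)=-1$, and
\[
f(\ell_n)=\ell_n(1+\sin(1/\ell_n))=0 .
\]

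\emph{(iii)} Similarly, $1/m_n=2\pi n+\pi/2$, so $\sin(1/m_n)=1$ and
\[
f(m_n)=m_n(1+1)=2m_n .
\]

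\emph{(iv)} For every $x\ne0$ we have $|x|\ge0$ and $1+\sin(1/x)\ge0$, so $f(x)\ge0$; also $f(0)=0$. Thus $f\ge0$ everywhere. Fix $\lambda>0$ and $n$, and set $\phi(w):=f(w)+\frac1{2\lambda}|w-\ell_n|^2$. For every $w$,
\[
\phi(w)\ge \tfrac1{2\lambda}|w-\ell_n|^2\ge0=\phi(\ell_n),
\]
using (ii) for the last equality. So $\ell_n$ is a minimizer of $\phi$. If $w\ne\ell_n$, the quadratic term is strictly positive, so
\[
\phi(w)\ge \tfrac1{2\lambda}|w-\ell_n|^2>0=\phi(\ell_n).
\]
Hence $\ell_n$ is the unique minimizer, and $P_\lambda f(\ell_n)=\{\ell_n\}$ for every $\lambda>0$. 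Fact~\ref{fact:proxMapNonemptyCompact} is not needed, since we exhibit the minimizer directly.

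The only point requiring care is the global inequality $f\ge0$ together with $f(\ell_n)=0$, which makes $\ell_n$ a global minimizer of $f$. Once that is in place, the strictly convex quadratic penalty forces uniqueness, so there is no real obstacle.
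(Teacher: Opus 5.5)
Your proposal is correct and follows essentially the same route as the paper. Items (ii)--(iii) come from evaluating $\sin(1/\ell_n)=-1$ and $\sin(1/m_n)=1$. Item (iv) comes from the fact that $f\ge 0$ together with $f(\ell_n)=0$ makes $w=\ell_n$ the unique point where the nonnegative objective $f(w)+\frac{1}{2\lambda}(w-\ell_n)^2$ vanishes.
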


    \begin{proof}
        Property (i) is clear. For property (ii) observe that
        \begin{align*}
            f(\ell_n) &= |\ell_n| \left( 1 + \sin \left( \frac 1 \ell_n \right) \right) \\
            & = |\ell_n| \left( 1 + \sin \left( \frac{3 \pi }{2} + 2 \pi n \right) \right) = |\ell_n| (1 - 1) = 0.
        \end{align*}
        For property (iii) observe that
        \begin{align*}
            f(m_n) &= |m_n| \left( 1 + \sin \left( \frac 1 m_n \right) \right) \\
            & = m_n \left( 1 + \sin \left( \frac{\pi }{2} + 2 \pi n \right) \right)= 2 m_n.
        \end{align*}
        For property (iv) observe that since $\sin \left(\frac 1 x\right) \ge -1$ we have $1 + \sin \left( \frac 1 x \right) \ge 0$ so $f(x) \ge 0$. Then $f(\ell_{n})=0$ is a global minimum of $f$, which implies
        $e_{\lambda}f(\ell_{n})=f(\ell_{n})=0$.
        Now, each term of
        \begin{align*}
            f(w) + \frac{(w - x)^2}{2\lambda}
        \end{align*}
        is non-negative so that
        \begin{align*}
            f(w) + \frac{(w- \ell_n)^2}{2\lambda} = 0 &\iff f(w) = 0 \text{ and } (w - \ell_n) = 0 \\
            & \iff w = \ell_n.
        \end{align*}
        Thus,
        \[
        P_\lambda f(\ell_n) = \arg \min_w \left\{ f(w) + \frac{(w-x)^2}{2\lambda}\right\} = \{\ell_n\}.
        \]
    \end{proof}

    \begin{lemma}
        \label{_sequenceDecreasing}
        The sequence $\{\ell_n\}_{n\in\NN}$ is strictly decreasing to $0$.
    \end{lemma}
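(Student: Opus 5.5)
The plan is to argue directly from the explicit formula $\ell_n = \frac{2}{\pi(4n+3)}$, treating the index as a continuous variable where convenient. The limit $\ell_n \to 0$ is already recorded as property (i) of the preceding lemma. So the only new content is strict monotonicity, and this reduces to comparing denominators.

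First, note that for every $n \in \NN$ the denominator $\pi(4n+3)$ is strictly positive. Hence $\ell_n > 0$ and the map $n \mapsto \ell_n$ is well defined. Next, compare consecutive terms. Since $4(n+1)+3 = 4n+7 > 4n+3 > 0$, and the function $t \mapsto \frac{2}{\pi t}$ is strictly decreasing on $(0,\infty)$, we get
\[
\ell_{n+1} = \frac{2}{\pi(4n+7)} < \frac{2}{\pi(4n+3)} = \ell_n .
\]
Equivalently, one can compute
\[
\ell_n - \ell_{n+1} = \frac{2}{\pi}\cdot\frac{4}{(4n+3)(4n+7)} > 0 .
\]
Induction, or simply transitivity, then gives $\ell_m < \ell_n$ whenever $m > n$.

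Finally, combine this with the limit. Let $\varepsilon > 0$ and take $N$ large enough that $\frac{2}{\pi(4N+3)} < \varepsilon$. Then $0 < \ell_n \le \ell_N < \varepsilon$ for all $n \ge N$, so the sequence decreases strictly to $0$ (and stays positive). There is no genuine obstacle here. The only point requiring a moment's care is the sign bookkeeping, namely that both denominators are positive so that taking reciprocals reverses the inequality. The role of the lemma later is presumably to locate, in any neighborhood $(-\delta,\delta)$ of $0$, consecutive points $\ell_{n+1} < \ell_n$ to which Corollary~\ref{_NecessaryConContRR} can be applied.
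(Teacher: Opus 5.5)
Your argument is correct and matches what the paper intends. The paper states this lemma without proof; your comparison of the positive denominators $\pi(4n+3) < \pi(4n+7)$, followed by taking reciprocals, is exactly the reasoning used in the proof of the adjacent Lemma~\ref{_middleSequence}, and you take the limit from property (i) of the preceding lemma.
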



    \begin{lemma}
    \label{_middleSequence}
        For all $n \ne 0$ we have $\ell_n < m_n < \ell_{n-1}$.
    \end{lemma}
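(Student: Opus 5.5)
We verify the two inequalities directly from the closed-form definitions
\[
\ell_n = \frac{2}{\pi(4n+3)}, \qquad m_n = \frac{2}{\pi(4n+1)}, \qquad \ell_{n-1} = \frac{2}{\pi(4n-1)}.
\]
All three numbers share the positive numerator $2/\pi$. Their denominators are $4n+3$, $4n+1$ and $4n-1$ respectively. Because $n \in \NN$ with $n \ne 0$, we have $n \ge 1$, so $4n-1 \ge 3 > 0$ and every denominator is positive.

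The key step is the chain of denominators $0 < 4n-1 < 4n+1 < 4n+3$. The map $t \mapsto \frac{2}{\pi t}$ is strictly decreasing on $(0,\infty)$, so it reverses this chain and gives
\[
\frac{2}{\pi(4n+3)} < \frac{2}{\pi(4n+1)} < \frac{2}{\pi(4n-1)},
\]
that is, $\ell_n < m_n < \ell_{n-1}$.

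The only point needing care is the hypothesis $n \ne 0$. It guarantees that $\ell_{n-1}$ is defined as an element of the sequence, since $n-1 \in \NN$. It also guarantees that the denominator $4n-1$ is positive, which is what lets us use monotonicity on $(0,\infty)$. Apart from this, the argument is the same elementary monotonicity fact that gives Lemma~\ref{_sequenceDecreasing}: the second inequality can alternatively be read off from it by observing that $m_n$ is obtained by moving the denominator halfway from $4n+3$ to $4n-1$.
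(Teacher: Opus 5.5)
Your proof is correct and follows the paper's argument: both compare the denominators $4n-1<4n+1<4n+3$ and invert, and your explicit check that $4n-1>0$ for $n\ge 1$ makes that step a little more careful than the paper's version. Your closing aside, that the second inequality can be read off from Lemma~\ref{_sequenceDecreasing}, is loose because that lemma says nothing about $m_n$, but your main argument does not rely on it.
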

    \begin{proof}
        Let $n \in \NN$ be nonzero.
        We have,
        \begin{align*}
            & \pi(4n + 3) > \pi (4n + 1) > \pi (4n - 1) \\
            \implies &  \frac 2 {\pi (4n +3)} < \frac 2 {\pi (4n +1)} < \frac 2 {\pi (4n - 1)}, \text{ i.e.,} \\
             & \ell_{n} < m_n < \ell_{n-1}.
        \end{align*}
    \end{proof}

    \begin{lemma}
        \label{_sequenceInTermsOfx}
        If $x = \ell_n$ for some $n \in \NN$ then
        \[
        n =  \frac{2 -  3\pi x}{ 4\pi x}.
        \]
        If $x \in (\ell_{n+1}, \ell_{n})$ then
        \[
        n = \left \lfloor \frac{2 - 3\pi x}{ 4\pi x} \right \rfloor.
        \]
    \end{lemma}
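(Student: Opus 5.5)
The plan is to invert the formula defining $\ell_n$ and then use the monotonicity from Lemma~\ref{_sequenceDecreasing} to locate $x$ between consecutive terms.

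\emph{First claim.} If $x = \ell_n = \frac{2}{\pi(4n+3)}$, then $x>0$, and cross-multiplying gives $\pi x(4n+3) = 2$. This rearranges to $4\pi x\, n = 2 - 3\pi x$. Dividing by $4\pi x \neq 0$ yields $n = \frac{2-3\pi x}{4\pi x}$, which is the first claim.

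\emph{Second claim.} Introduce the auxiliary function
\[
g(x) := \frac{2-3\pi x}{4\pi x} = \frac{1}{2\pi x} - \frac34, \qquad x>0 .
\]
The second form shows that $g$ is strictly decreasing and continuous on $(0,\infty)$. By the first part, $g(\ell_k) = k$ for every $k\in\NN$.

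Now take $x \in (\ell_{n+1}, \ell_n)$. Since $\ell_{n+1} < \ell_n$ by Lemma~\ref{_sequenceDecreasing}, this interval is nonempty. Strict monotonicity of $g$ gives
\[
g(\ell_n) < g(x) < g(\ell_{n+1}), \quad\text{that is,}\quad n < g(x) < n+1 .
\]
Hence $\lfloor g(x) \rfloor = n$, which is the second claim.

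\emph{Main point to check.} There is no genuine obstacle. The only thing to verify is that $g$ is strictly decreasing, so that the order of $x$ relative to $\ell_{n+1}$ and $\ell_n$ reverses correctly. This follows from rewriting $g(x) = \frac{1}{2\pi x} - \frac34$ as above. Positivity of $x$, needed to divide by $4\pi x$, is automatic because every $\ell_k>0$.
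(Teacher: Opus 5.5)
Your proof is correct and follows essentially the same route as the paper. Like the paper, you invert $\ell_n = \frac{2}{\pi(4n+3)}$ algebraically and then convert $\ell_{n+1} < x < \ell_n$ into $n < \frac{2-3\pi x}{4\pi x} < n+1$; the paper does that second step by manipulating the inequalities directly, while you phrase it through the strictly decreasing function $g(x) = \frac{1}{2\pi x} - \frac34$ with $g(\ell_k) = k$.
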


    \begin{proof}
        Let $n \in \NN$. Recall
        \[
            \ell_n = \frac{2}{\pi (4n+3)}.
        \]
        First, we see
        \begin{align*}
            x = \ell_n &\iff x = \frac{2}{\pi (4n +3)} \iff \frac{2}{\pi x} = 4n + 3\\
            &\iff n = \frac{2 - 3\pi x}{4 \pi x}.
        \end{align*}
        Now, when $x \ne \ell_k$ for any $k$,
        \begin{align*}
            x \in (\ell_{n+1}, \ell_n) & \iff \ell_{n+1} < x < \ell_n \iff \frac{2}{\pi(4(n+1)+3)} < x < \frac{2}{\pi(4n + 3)} \\
            & \iff n+1 > \frac{2 - 3 \pi x}{4 \pi x} > n \iff \left\lfloor \frac{2 - 3 \pi x}{4 \pi x} \right\rfloor = n.
        \end{align*}
    \end{proof}

    Now, we have done sufficient work to find a gap in $P_\lambda f$.
    \begin{lemma} Let $\lambda>0$. Then
    \label{_gapExistance}
        there exists $N \in \NN$ such that for all $n > N$ we have $m_n \not \in P_\lambda f(\RR)$.
    \end{lemma}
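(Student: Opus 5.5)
The plan is to argue directly from the definition of $P_\lambda f$ as an argmin. If $m_n \in P_\lambda f(x)$ for some $x\in\RR$, then for every competitor $w'$ we must have
\[
f(m_n) + \frac{(m_n-x)^2}{2\lambda} \le f(w') + \frac{(w'-x)^2}{2\lambda}.
\]
I will choose $w'$ to be one of the two neighbouring zeros of $f$, namely $\ell_n$ or $\ell_{n-1}$. By the previous lemma $f(\ell_n)=f(\ell_{n-1})=0$, and $f(m_n)=2m_n$. The goal is to show that, for $n$ large, one of these competitors gives a strictly smaller objective value for every $x$. That shows $m_n$ is never a minimizer.

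Using $f(m_n)=2m_n$, the required contradiction is the strict inequality
\[
(w'-x)^2-(m_n-x)^2 = (w'-m_n)(w'+m_n-2x) < 4\lambda m_n .
\]
I split into two cases according to the position of $x$, using the ordering $\ell_n<m_n<\ell_{n-1}$ from Lemma~\ref{_middleSequence}.

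\emph{Case $x\ge m_n$.} Take $w'=\ell_{n-1}>m_n$. Then $w'+m_n-2x \le w'-m_n$, so the left side is at most $(\ell_{n-1}-m_n)^2$.

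\emph{Case $x\le m_n$.} Take $w'=\ell_n<m_n$. The product equals $(m_n-\ell_n)(2x-\ell_n-m_n)\le (m_n-\ell_n)^2$.

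In both cases the difference is bounded by the square of a gap between consecutive special points, uniformly in $x$, including negative $x$.

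It remains to compare these gaps with $m_n$. A direct computation gives
\[
\ell_{n-1}-m_n=\frac{4}{\pi(4n-1)(4n+1)},\qquad m_n-\ell_n=\frac{4}{\pi(4n+1)(4n+3)}.
\]
Both are bounded by $c/n^2$ for some constant $c$, so their squares are $O(n^{-4})$. On the other hand $4\lambda m_n = 8\lambda/(\pi(4n+1))$ is of order $n^{-1}$. Hence there is $N$ (depending on $\lambda$) such that for all $n>N$ both squared gaps are strictly less than $4\lambda m_n$. For such $n$ and every $x$, the chosen $w'$ strictly beats $m_n$, so $m_n\notin P_\lambda f(x)$. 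Therefore $m_n\notin P_\lambda f(\RR)$.

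The only delicate point is making the bound uniform in $x$. The case split above handles this, since in each case the linear factor $w'+m_n-2x$ is controlled by the gap itself. The remaining step, choosing $N$ explicitly, is routine; for instance one can require $16/(\pi^2(4n-1)^2)<2\lambda$ after cancelling one factor of $4n+1$.
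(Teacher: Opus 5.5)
Your proof is correct, and it takes a different route from the paper's.

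\textbf{The paper's route.} The paper splits into three cases according to where $x$ sits relative to $\ell_k$ and $\ell_{k-1}$. For $x\le \ell_k$ or $x\ge \ell_{k-1}$ it uses monotonicity of $P_\lambda f$ (Fact~\ref{fact:proxMonotone}) together with $P_\lambda f(\ell_n)=\{\ell_n\}$. This forces every $y\in P_\lambda f(x)$ onto the wrong side of $m_k$. For $x\in(\ell_k,\ell_{k-1})$ it uses the single competitor $w=0$ and the floor-function description of Lemma~\ref{_sequenceInTermsOfx}. This gives $e_\lambda f(x)\le \frac{x^2}{2\lambda}<\frac45 x<m_k<f(m_k)+\frac{(m_k-x)^2}{2\lambda}$, which needs $x$ small relative to $\lambda$.

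\textbf{Your route.} You take the two neighbouring zeros $\ell_n,\ell_{n-1}$ as competitors. The factorization $(w'-x)^2-(m_n-x)^2=(w'-m_n)(w'+m_n-2x)$ then gives a bound uniform in all $x\in\RR$, with only two cases.

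\textbf{What each approach buys.} Your argument is self-contained. It needs neither monotonicity of the proximal mapping, nor $P_\lambda f(\ell_n)=\{\ell_n\}$, nor Lemma~\ref{_sequenceInTermsOfx}. It also makes the mechanism transparent: consecutive zeros are $O(n^{-2})$ apart, while the bump height $f(m_n)=2m_n$ is of order $n^{-1}$. In fact your computation proves a general criterion. If $a<w<b$ with $f(a)=f(b)=0$ and $2\lambda f(w)>\max\{(w-a)^2,(b-w)^2\}$, then $w\notin P_\lambda f(\RR)$. The paper's route instead leans on the global structure of $P_\lambda f$ (monotone, fixing each $\ell_n$) and on the competitor $0$ dominating the envelope near the origin. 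Your explicit threshold $\frac{16}{\pi^2(4n-1)^2}<2\lambda$ is not literally what cancelling one factor of $4n+1$ gives, but it is stronger than needed, so it remains a valid sufficient condition.
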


    \begin{proof}
        Recall
        \[
        P_\lambda f(x) = \arg \min_w \left \{ f(w) + \frac{(x-w)^2}{2\lambda} \right \}
        \]
        and for all $y \in P_\lambda f(x)$ we have $e_\lambda f(x) = f(y) + \frac{(x-y)^2}{2\lambda}.$
         Take $N$ large enough so that for any $n > N$ and any $x \in (0, \ell_n)$ we have $n > 5$ and $\frac{1}{2\lambda}x^2 < \frac 4 {5}x$. We claim that $m_{k}\not\in P_{\lambda}f(\RR)$ for $k > N$.
         We have three cases to consider.

        First, suppose $x \le \ell_k$. Then, since $P_\lambda f$ is monotone by Fact~\ref{fact:proxMonotone} and since $P_\lambda f(\ell_n) = \{\ell_n\}$, $y \in P_\lambda f(x)$ has $y \le \ell_k < m_k$.

        Suppose $x \ge \ell_{k-1}$. Then, by monotonicity of $P_\lambda f$ and since $P_\lambda f(\ell_n) = \{\ell_n\}$, $y \in P_\lambda f(x)$ has $y \ge \ell_{k-1} > m_k$.

        Now, suppose $x \in (\ell_k, \ell_{k-1})$. By Lemma \ref{_sequenceInTermsOfx},
        \[
        k - 1 = \left \lfloor \frac{2-3\pi x}{4 \pi x} \right \rfloor.
        \]
        By our assumption on $N$ we have $k-1 \ge 5$. So we have,
        \begin{align*}
            m_k = \frac{2}{\pi(4k+1)} &= \frac{2}{\pi(4(k-1)+5)}\ge \frac{2}{5\pi (k-1)} = \frac{2}{5\pi \left \lfloor \frac{2-3\pi x}{4 \pi x} \right \rfloor} \\
            &\geq \frac{2}{5\pi\left ( \frac{2-3\pi x}{4 \pi x} \right )}
            =\frac{8 x }{5(2 - 3 \pi x)} > \frac{4}{5}x
        \end{align*}
        where the last inequality follows because $x < \ell_N < \ell_0 = \frac{2}{3 \pi}$. Now,
        \begin{align*}
            e_\lambda f(x) \le f(0) + \frac{1}{2\lambda}x^2 = \frac{1}{2\lambda}x^2 < \frac{4}{5}x < m_k < 2 m_k + \frac{(m_k - x)^2}{2 \lambda}
        \end{align*}
        so $m_k \not \in P_\lambda f(x)$. All three cases together give $m_k \not \in P_\lambda f(\RR)$.
\end{proof}

\begin{figure}[h]
    \centering
    \includegraphics[width=0.8\linewidth]{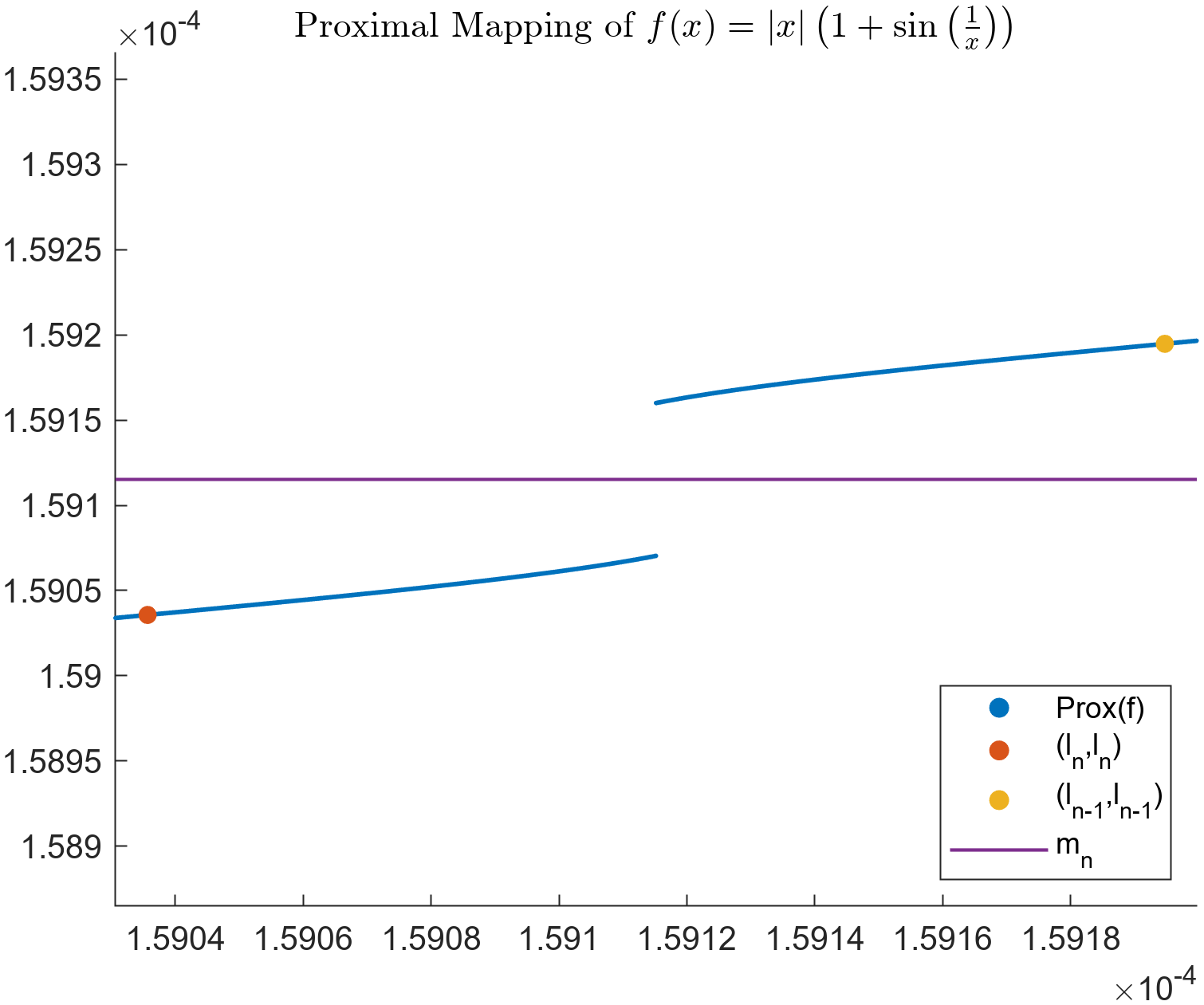}
    \caption{Plot of $P_\lambda f$ between the points $\ell_n$ and $\ell_{n-1}$ with $n = 1000$ and $\lambda = 2^{-37}$. Observe the gap about $m_n$.}
    \label{fig:proxLambda_Zoom_2^(-37)}
\end{figure}

\begin{theorem}
    \label{_Discontinuous}
    Let $f$ be given by \eqref{e:rock-wets}. Then
    in any neighborhood of $0$ there exists a point $\bar x$ such that $P_\lambda f$ is not continuous at $\bar x$ and is multivalued, i.e., $P_{\lambda}f(\bar x)$ contains at least two distinct elements.
    Consequently,  $f$ does not have a locally Lipschitz proximal mapping,
\end{theorem}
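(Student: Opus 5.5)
Fix $\lambda>0$ and an arbitrary neighborhood $U$ of $0$; we may take $U=(-\delta,\delta)$. The idea is to apply Corollary~\ref{_NecessaryConContRR} to $S=P_\lambda f$, which is osc, locally bounded and has full (hence convex) domain, as already noted via Facts~\ref{fact:proxOscLB} and~\ref{fact:proxMapNonemptyCompact}. By Lemma~\ref{_gapExistance} there is $N$ such that $m_n\notin P_\lambda f(\RR)$ for all $n>N$. Using Lemma~\ref{_sequenceDecreasing} and property (i), choose $k>N+1$ with $\ell_{k-1}<\delta$. Then $x_0:=\ell_k$ and $x_1:=\ell_{k-1}$ both lie in $U$, the segment $[\ell_k,\ell_{k-1}]$ lies in $U$, and $m_k\in(\ell_k,\ell_{k-1})$ is a value never attained by $P_\lambda f$.

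Next I split the graph at the missing value $m_k$:
\[
A:=\{(x,y)\in\gph P_\lambda f\mid y<m_k\},\qquad B:=\{(x,y)\in\gph P_\lambda f\mid y>m_k\}.
\]
Because $m_k\notin P_\lambda f(\RR)$, we get $\gph P_\lambda f=A\cup B$. Moreover $\bar A\subseteq\{y\le m_k\}$ and $\bar B\subseteq\{y\ge m_k\}$, so any point of $\bar A\cap B$ or $A\cap\bar B$ would have $y=m_k$. That is impossible, since points of $A$ and $B$ have $y\ne m_k$. Hence $A$ and $B$ are disjoint from each other's closure.

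Nonemptiness and the endpoint conditions come from property (iv):
\begin{itemize}
\item $P_\lambda f(\ell_k)=\{\ell_k\}$ with $\ell_k<m_k$ by Lemma~\ref{_middleSequence}, so $\{(x_0,v)\mid v\in S(x_0)\}\sube A$ and $A\neq\varnothing$;
\item $P_\lambda f(\ell_{k-1})=\{\ell_{k-1}\}$ with $\ell_{k-1}>m_k$, so $(x_1,\ell_{k-1})\in B$ and $B\neq\varnothing$.
\end{itemize}
Corollary~\ref{_NecessaryConContRR} then yields $\bar x\in[\ell_k,\ell_{k-1}]\subset U$ at which $P_\lambda f$ is multivalued and not isc, hence not continuous. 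Since $U$ was arbitrary, this proves the first claim.

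For the consequence, suppose $P_\lambda f$ were Lipschitz on some neighborhood of $0$, either in the usual sense or in the sense of Definition~\ref{d:lipset}. A Lipschitz mapping is continuous, which contradicts the existence of discontinuity points $\bar x$ in every neighborhood. Alternatively, by Lemma~\ref{l:singlevalue} and monotonicity (Fact~\ref{fact:proxMonotone}), it would be single-valued there, contradicting multivaluedness at $\bar x$. The only genuinely delicate step is ensuring that $m_k$ lies strictly between the images of the endpoints and is omitted by the whole range, not just over the segment; Lemma~\ref{_gapExistance} already supplies this, so the proof reduces to checking the separation hypotheses above.
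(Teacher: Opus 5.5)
Your proof is correct and is essentially the paper's argument. Like the paper, you split $\gph P_\lambda f$ at the value $m_k$ omitted by Lemma~\ref{_gapExistance}, use $P_\lambda f(\ell_j)=\{\ell_j\}$ at the two endpoints, and apply Corollary~\ref{_NecessaryConContRR}; the paper only differs in calling the upper piece $A$ and starting the segment from $\ell_{n-1}$. You also spell out the ``Consequently'' clause (via continuity of Lipschitz maps, or via Lemma~\ref{l:singlevalue}), which the paper leaves implicit.
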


\begin{proof}
    Let $O$ be some neighborhood of $0$. Take $N$ large enough so that for all $n > N$ we have, by Lemma \ref{_gapExistance}, $m_n \not \in P_\lambda f(\RR)$ and $\ell_{n-1} \in O$. It is clear that $f$ is proper and lsc. Additionally, since $f(x) \ge 0$, $f$ is prox bounded with threshold $\lambda_f = +\infty$. Thus, by Fact~\ref{fact:proxOscLB} and Fact~\ref{fact:proxMapNonemptyCompact}, $P_\lambda f$ is a set valued mapping from $\RR$ to $\RR$, is osc, locally bounded, and has full domain. Take the two half-spaces $C_1 := \{(x,v)\in \RR\times\RR \mid v > m_n\}$ and $C_2 := \{(x,v)\in\RR\times\RR \mid v < m_n\}$. Then $\bar C_1 \cap C_2 = C_1 \cap \bar C_2 = \varnothing$.
    Put $A:=\gph P_{\lambda}f\cap C_{1}$ and $B:=\gph P_{\lambda}f\cap C_{2}$. Since $m_n \not \in P_\lambda f(\RR)$ we have
    $\gph P_\lambda f = A \cup B$. Moreover, $\{(\ell_{n-1},v) \mid v \in P_\lambda f(\ell_{n-1})\} = \{(\ell_{n-1}, \ell_{n-1})\} \sube A$ and $\{(\ell_{n},v) \mid v \in P_\lambda f(\ell_n)\} = \{(\ell_n,\ell_n)\} \sube B$. Thus, by Corollary \ref{_NecessaryConContRR}, there exists a point $\bar x$ with $\bar x \in [\ell_n, \ell_{n-1}] \sube O$ such that $P_\lambda f$ is not isc at $\bar x$ and $P_\lambda f(\bar x)$ is multivalued.
\end{proof}

\section{Locally Lipschitz proximal mappings do not imply prox-regularity of functions}\label{s:final}
We propose the following general result,
which gives a wealth of examples whose proximal mappings are locally Lipschitz but the functions are not prox-regular.
\begin{theorem} \label{NotProxRegular_Theorem}
    Let $f : \RR \to \RR$ satisfy
\begin{enumerate} [(i)]
    \item\label{i:i1} $f(0) = 0$,
    \item\label{i:i2} there exists $C > 0$ such that $f(x) \geq C|x|$ for every $x\in\RR$, and
    \item\label{i:i3} $f'(x)$ exists on the open set $(0,a)$ for some positive $a$ and oscillates between $-\infty$ and $+\infty$, say
    \[
    \limsup_{x \to 0^+} f'(x) = +\infty \text{ and } \liminf_{x \to 0^+} f'(x) = -\infty.
    \]
\end{enumerate}
Then the following hold:
\begin{enumerate}[(a)]
    \item\label{i:a1} For $\lambda > 0$ and $|x| < C\lambda$ we have $P_\lambda f(x) = \{0\}$ and $e_\lambda f(x) = \frac{1}{2\lambda} x^2$.
    \item\label{i:a2} $[-C,C] \sube \hat \partial f(0)$, and $\partial f(0) = \RR$.
    \item\label{i:a3} $f$ is not prox-regular for $0 \in \partial f(0)$ at $x = 0$.
\end{enumerate}
\end{theorem}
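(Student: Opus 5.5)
For \ref{i:a1}, I will compare the objective at $w$ with its value at $w=0$. For $w\ne 0$, hypothesis \ref{i:i2} gives
\[
f(w)+\tfrac{1}{2\lambda}(w-x)^2 \ge C|w| + \tfrac{1}{2\lambda}(w^2-2wx+x^2).
\]
Subtracting the value $\tfrac{1}{2\lambda}x^2$ at $w=0$ (using $f(0)=0$) leaves
\[
C|w|-\tfrac{1}{\lambda}wx+\tfrac{1}{2\lambda}w^2 \ge |w|\bigl(C-\tfrac{|x|}{\lambda}\bigr)+\tfrac{1}{2\lambda}w^2 .
\]
This is strictly positive when $|x|<C\lambda$ and $w\neq 0$. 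Hence $0$ is the unique minimizer, so $P_\lambda f(x)=\{0\}$ and $e_\lambda f(x)=\tfrac{1}{2\lambda}x^2$.

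For the first part of \ref{i:a2}, take $v\in[-C,C]$. Then
\[
f(x)-f(0)-vx \ge C|x|-|v||x|\ge 0,
\]
so the difference quotient in the definition of $\hat\partial f(0)$ is nonnegative, and $v\in\hat\partial f(0)$.

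For $\partial f(0)=\RR$, I will fix $v\in\RR$ and build $x^\nu\downarrow 0$ with $f'(x^\nu)=v$. Since $f'$ exists on $(0,a)$, it has the Darboux (intermediate value) property there. By \ref{i:i3}, on each interval $(0,1/\nu)$ there are points where $f'>v$ and points where $f'<v$, so Darboux gives $x^\nu\in(0,1/\nu)$ with $f'(x^\nu)=v$. By Fact~\ref{fact:diffSubdifEq}, $v\in\hat\partial f(x^\nu)$. I also need $f(x^\nu)\to f(0)=0$. Since $f$ is differentiable on $(0,a)$ it is continuous there, but continuity at $0$ is not among the hypotheses. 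This is the main obstacle.

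To handle it, I would use that the statement presumes $f$ is finite and locally lsc at $0$; otherwise prox-regularity, and even Fact~\ref{fact:proxMapNonemptyCompact}, would not be available. If continuity at $0$ is part of the intended hypotheses (as in the corrected example), then $f(x^\nu)\to 0$ is immediate. Otherwise I would try to choose $x^\nu$ more carefully, by locating the points with $f'$ above and below $v$ near points where $f$ is small. I would try this first, and fall back on assuming continuity at $0$ if it fails. Granting $f(x^\nu)\to 0$, we get $v\in\partial f(0)$, so $\partial f(0)=\RR$. The same construction also shows $\partial f(x)\ni v$ at points $x$ arbitrarily close to $0$ with $f(x)$ close to $0$.

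For \ref{i:a3}, I will argue by contradiction. Suppose $\varepsilon>0$ and $\rho\ge 0$ are as in the definition, with $\bar v=0$. By the construction in \ref{i:a2} with $v=0$, there are points $x\in(0,\varepsilon)$ with $f'(x)=0$, $0\in\partial f(x)$, and $f(x)<\varepsilon$. Plugging $x'=0$ into the prox-regularity inequality gives
\[
0=f(0)\ge f(x)-\tfrac{\rho}{2}x^2\ge Cx-\tfrac{\rho}{2}x^2 .
\]
This fails once $x<2C/\rho$ (or for every $x>0$ if $\rho=0$). Since such $x$ can be taken arbitrarily small, this is a contradiction, so $f$ is not prox-regular at $0$ for $0$.

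Note that the lsc requirement in the definition of prox-regularity is not needed for this: even if it were satisfied, the quadratic inequality already fails. So \ref{i:a3} follows from \ref{i:i2} together with the existence of stationary points $x\downarrow 0$ with $f(x)\to 0$.
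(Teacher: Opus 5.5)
Your argument follows the paper's proof step for step. In (a) you make the same comparison with $w=0$; your single estimate $C|w|-\frac{1}{\lambda}wx+\frac{1}{2\lambda}w^2\ge |w|\left(C-\frac{|x|}{\lambda}\right)+\frac{1}{2\lambda}w^2>0$ is a tidier replacement for the paper's two cases $xw\ge 0$ and $xw\le 0$. In (b) you use the same difference-quotient bound and the same Darboux points, and in (c) the same test point $x'=0$. The obstacle you flag is genuine, and the paper's proof passes over it without comment. In (b) it never checks that $x^\nu \xrightarrow[f]{} 0$, and in (c) it simply picks $\nu$ with $f(x^\nu)<\varepsilon$.

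Your proposed ways around it, other than the fallback, cannot succeed, because the second claim of (b) and claim (c) are false under (i)--(iii) alone. Lower semicontinuity does not help. It holds at $0$ automatically, since $f\ge 0=f(0)$, and it only bounds $\liminf f$ from below; it never gives $f(x^\nu)\to 0$.

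For a counterexample, take $f(x)=|x|$ for $x\le 0$ and $f(x)=1+x\left(2+\sin\frac{1}{x}\right)$ for $x>0$. Then $f(0)=0$, $f(x)\ge|x|$ (so $C=1$), and $f$ is lsc on $\RR$. Moreover $f'(x)=2+\sin\frac1x-\frac1x\cos\frac1x$ oscillates between $-\infty$ and $+\infty$ as $x\to0^+$. But $f>1$ on $(0,\infty)$, so there are no points near $0^+$ with small $f$-values, and nothing to ``choose more carefully''. One computes $\hat\partial f(0)=\partial f(0)=[-1,\infty)\ne\RR$. Also, $f$ is prox-regular at $0$ for $0$ with $\varepsilon=\frac12$, $\rho=0$. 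The only admissible pairs are $x=0$ with $|v|<\frac12$: points $x>0$ have $f(x)>1$, and points $x<0$ have $\partial f(x)=\{-1\}$. For these pairs, $f(x')\ge vx'$ holds for all $|x'|<\frac12$.

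So the fallback is not optional: you must add a hypothesis such as $\lim_{x\to0^+}f(x)=0$. The functions of Example~\ref{e:explicit} satisfy it, since $0\le f(x)\le(m+1)|x|$. With that hypothesis, every sequence $x^\nu\downarrow 0$ has $f(x^\nu)\to 0$, and your proof of (b) and (c) is complete.
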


\begin{proof}
    To prove
    \ref{i:a1} it suffices to show
    \[
    f(0) + \frac{1}{2\lambda}x^2 < f(w) + \frac{1}{2 \lambda} (w-x)^2
    \]
    for all $|x| < C\lambda$ and all $w \ne 0$. Suppose $xw \ge 0$. Since $f(w) \geq C|w|$ we have $\frac{f(w)}{|w|}\geq C$. Thus,
    \begin{align*}
        |x| &< C\lambda \leq \frac{\lambda }{|w|}f(w) < \frac{\lambda }{|w|}f(w) + \frac 1 2 |w| \\
        \implies & 0 < \frac{1}{|w|}f(w) + \frac{1}{2 \lambda} (|w| - 2|x|) \\
        \implies & 0 < f(w) + \frac{1}{2 \lambda} (w^2 - 2|wx|) \\
        \implies & \frac{1}{2\lambda}x^2 < f(w) + \frac{1}{2 \lambda} (w^2 - 2wx  + x^2) \\
        \implies & f(0) + \frac{1}{2\lambda}x^2 < f(w) + \frac{1}{2 \lambda} (w-x)^2.
    \end{align*}
    Now, suppose $xw \le 0$,
    \begin{align*}
    f(w) + \frac{1}{2\lambda}(w-x)^2 & = f(w) + \frac{1}{2\lambda}(|x|+|w|)^2 > f(0) + \frac{1}{2\lambda}x^2.
\end{align*}
    In any case we have $P_\lambda f(x) = \{0\}$ for all $|x| < C\lambda$ and
\[
e_\lambda f(x) = f(0) + \frac{1}{2\lambda}x^2 = \frac{1}{2\lambda}x^2.
\]
We now prove result \ref{i:a2}. Recall that $v \in \hat \partial f(\bar x)$ if and only if
\begin{equation}\label{e:barx}
\liminf_{\substack{x \to \bar x \\ x \ne \bar x}} \frac{f(x) - f(\bar x) - \langle v,x - \bar x \rangle}{|x - \bar x|} \ge 0.
\end{equation}
Let $v$ be such that $|v| \le C$. Then, with $\bar x=0$ in \eqref{e:barx},
\begin{align*}
    \liminf_{\substack{x \to 0 \\ x \ne 0}} \frac{f(x) - f(0) - \langle v,x  \rangle}{|x|} &= \liminf_{\substack{x \to 0 \\ x \ne 0}} \frac{f(x) - vx}{|x|} \\
    & \ge \liminf_{\substack{x \to 0 \\ x \ne 0}} \frac{C|x| - vx}{|x|} \\
    &= \liminf_{\substack{x \to 0 \\ x \ne 0}} \left(C- \frac{x}{|x|}v\right) \ge 0.
\end{align*}
Thus, $[-C,C] \sube \hat \partial f(0)$. Now, recall $v \in \partial f(\bar x)$ if and only if there are sequences $x^\nu \xrightarrow[f]{} \bar x$ and $v^\nu \in \hat \partial f(x^\nu)$ with $v^\nu \to v$. Take some $v \in \RR$ and use Darboux's property of $f'$ (see, e.g., \cite[Theorem 7.31]{thomson}) on $(0,a)$ to find $x^\nu \to 0^+$ such that $f'(x^\nu) =y$. Then, by Fact~\ref{fact:diffSubdifEq}, $\hat \partial f(x^\nu) = \{f'(x^\nu)\} = \{y\}$ so $y \in \partial f(0)$. Thus, $\partial f(0) = \RR$.

We now prove result \ref{i:a3}. Recall that $f$ is prox-regular at $\bar x $ for $\bar v$ if $f$ is finite and locally lsc at $\bar x$ with $\bar v \in \partial f(\bar x)$ and there exists $\varepsilon > 0$ and $\rho \ge 0$ such that
\begin{align*}
    f(x') \ge f(x) + \langle v,x' - x \rangle - \frac{\rho}{2}|x'-x|^2 \text{ for all }x' \in \BB_\varepsilon (\bar x) \\
    \text{ when } v \in \partial f(x), |v - \bar v| < \varepsilon, |x-\bar x| < \varepsilon, f(x) < f(\bar x) + \varepsilon.
\end{align*}
By way of contradiction assume that $f$ is prox-regular at $\bar x = 0$ for $\bar v = 0$. Let $\varepsilon > 0$, $\rho \ge 0$, and $x' = 0 \in \BB_\varepsilon(0)$. Apply Darboux's property to find an $x^\nu \to 0^+$ such that $f'(x^\nu) = 0$. Then $\{f'(x^\nu)\} = \{0\} \sube \partial f(x^\nu)$ so we can choose $v = 0$. Choose $\nu$ sufficiently large so that $x^\nu < \min\{\frac{2C}{\rho}, \varepsilon\}$ and $f(x^\nu) < \varepsilon$. We also have $|v - \bar v| = 0 < \varepsilon$, $|x - \bar x| = |x^\nu| < \varepsilon,$ and $f(x) = f(x^\nu) < \varepsilon$. Thus, by our assumption that $f$ is prox-regular,
\begin{align*}
    & f(x') \ge f(x) + \langle v,x' - x \rangle - \frac{\rho}{2}|x'-x|^2 \text{ for all }x' \in \BB_\varepsilon (\bar x) \\
    \implies & f(0) \ge f(x^\nu)  - \frac{\rho}{2}|x^\nu|^2 \\
    \implies & \frac{\rho}{2}|x^\nu|^2 \ge f(x^\nu) \ge C|x^\nu| \\
    \implies & \frac{\rho}{2}|x^\nu| > C
\end{align*}
for this to be true $\rho \ne 0$ and
\[
|x^\nu| = x^\nu > \frac{2C}{\rho}
\]
which contradicts our assumption on $x^\nu$. Thus, $f$ is not prox-regular at $\bar x = 0$ for $\bar v = 0$.
\end{proof}

\noindent Observe that the counter example gone over in Section~\ref{sect:proxExample} does not
meet requirement \ref{i:i2} of Theorem~\ref{NotProxRegular_Theorem}. The following example provides a remedy!

\begin{example}\label{e:explicit} Define $f:\RR\rightarrow\RR$ by
    \begin{equation*}
        x\mapsto
         \begin{cases}
            |x|\left( m + \sin \frac 1 {nx} \right), & x \ne 0, \\
            0, & x = 0
        \end{cases}
    \end{equation*}
    with $m > 1$ and $n \ne 0$. See Figure~\ref{fig:_fmnPlot} below for the plot of the function.
    \begin{figure}[h]
    \centering
    \includegraphics[width=0.8\linewidth]{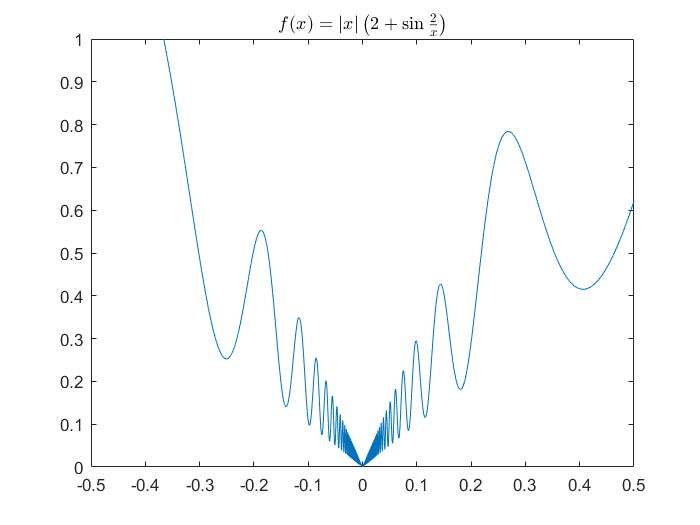}
    \caption{Plot of $|x|\left(m + \sin \left( \frac 1 {nx} \right) \right)$ with $m = 2$ and $n = 1/2$.}
    \label{fig:_fmnPlot}
\end{figure}
Let $C = m -1 > 0$. We have $f(0) = 0$ and $f(x) \ge C|x|$ since $\sin \frac{1}{nx} \ge -1$. Additionally, for $x > 0$ we have
    \[
    f'(x) =
        \displaystyle-\frac{\cos \left(\frac 1 {nx}\right)}{nx} + \sin \left( \frac 1 {nx} \right) + m
    \]
    Let $x^\nu = \frac{1}{2\pi \nu n}$ and $x^\gamma = \frac{1}{n(2\pi \gamma + \pi )}$ so $\cos \left( \frac 1 {nx^\nu}\right) = 1$ and $\cos \left( \frac 1 {nx^\gamma}\right) = -1$. Now,
    \begin{align*}
        \liminf_{x \to 0^+} f'(x) &= \liminf_{x \to 0^+}\left(\displaystyle-\frac{\cos \left(\frac 1 {nx}\right)}{nx} + \sin \left( \frac 1 {nx} \right) + m \right) \\
        &\le \liminf_{\nu \to \infty}\left(\displaystyle-\frac{\cos \left(\frac 1 {nx^\nu}\right)}{nx^\nu} + \sin \left( \frac 1 {nx^\nu} \right) + m \right) \\
        &= \liminf_{\nu \to \infty}\left(\displaystyle-\frac{1}{nx^\nu} + m \right) = - \infty,
    \end{align*}
    and
    \begin{align*}
        \limsup_{x \to 0^+} f'(x) &= \limsup_{x \to 0^+}\left(\displaystyle-\frac{\cos \left(\frac 1 {nx}\right)}{nx} + \sin \left( \frac 1 {nx} \right) + m \right) \\
        & \ge \limsup_{\gamma \to \infty}\left(\displaystyle-\frac{\cos \left(\frac 1 {nx^\gamma}\right)}{nx^\gamma} + \sin \left( \frac 1 {nx^\gamma} \right) + m \right) \\
        &= \limsup_{\gamma \to \infty}\left(\displaystyle\frac{1}{nx^\gamma} +  m \right) = + \infty.
    \end{align*}
    Therefore the requirements for Theorem \ref{NotProxRegular_Theorem} are met and we can conclude that
    \begin{enumerate}[(a)]
    \item for $|x| < C\lambda$, $P_\lambda f(x) = \{0\}$, and $e_\lambda f(x) = \frac{1}{2\lambda} x^2$.
    Consequently, $P_{\lambda}f$ is Lipschitz and $e_{\lambda}f$ is differentiable around $0$.
    \item $[-C,C] \sube \hat \partial f(0)$, and $\partial f(0) = \RR$.
    \item $f$ is not prox-regular for at $x = 0$ for $0 \in \partial f(0)$,
\end{enumerate}
\end{example}


Hence, Example~\ref{e:explicit} shows that Fact~\ref{_rockafellarProp}\ref{i:mor1} and \ref{i:mor2} do not suffice for the prox-regularity of the function.

\section*{Acknowledgements}
Isaac Jasper was supported by NSERC USRA and Xianfu Wang was partially supported by NSERC Discovery Grant.

\section*{Declarations}
The authors have no competing interests to declare that are relevant to the content of this article.

\bibliography{ref}

\end{document}